\renewcommand\eqref[1]{(\ref{#1})}
\newcommand*{\mint}[1]{%
  \mint@l{#1}{}%
}
\newcommand*{\mint@l}[2]{%
  \@ifnextchar\limits{%
    \mint@l{#1}%
  }{%
    \@ifnextchar\nolimits{%
      \mint@l{#1}%
    }{%
      \@ifnextchar\displaylimits{%
        \mint@l{#1}%
      }{%
        \mint@s{#2}{#1}%
      }%
    }%
  }%
}
\newcommand*{\mint@s}[2]{%
  \@ifnextchar_{%
    \mint@sub{#1}{#2}%
  }{%
    \@ifnextchar^{%
      \mint@sup{#1}{#2}%
    }{%
      \mint@{#1}{#2}{}{}%
    }%
  }%
}
\def\mint@sub#1#2_#3{%
  \@ifnextchar^{%
    \mint@sub@sup{#1}{#2}{#3}%
  }{%
    \mint@{#1}{#2}{#3}{}%
  }%
}
\def\mint@sup#1#2^#3{%
  \@ifnextchar_{%
    \mint@sup@sub{#1}{#2}{#3}%
  }{%
    \mint@{#1}{#2}{}{#3}%
  }%
}
\def\mint@sub@sup#1#2#3^#4{%
  \mint@{#1}{#2}{#3}{#4}%
}
\def\mint@sup@sub#1#2#3_#4{%
  \mint@{#1}{#2}{#4}{#3}%
}
\newcommand*{\mint@}[4]{%
  \mathop{}%
  \mkern-\thinmuskip
  \mathchoice{%
    \mint@@{#1}{#2}{#3}{#4}%
        \displaystyle\textstyle\scriptstyle
  }{%
    \mint@@{#1}{#2}{#3}{#4}%
        \textstyle\scriptstyle\scriptstyle
  }{%
    \mint@@{#1}{#2}{#3}{#4}%
        \scriptstyle\scriptscriptstyle\scriptscriptstyle
  }{%
    \mint@@{#1}{#2}{#3}{#4}%
        \scriptscriptstyle\scriptscriptstyle\scriptscriptstyle
  }%
  \mkern-\thinmuskip
  \int#1%
  \ifx\\#3\\\else_{#3}\fi
  \ifx\\#4\\\else^{#4}\fi
}
\newcommand*{\mint@@}[7]{%
  \begingroup
    \sbox0{$#5\int\m@th$}%
    \sbox2{$#5\int_{}\m@th$}%
    \dimen2=\wd0 %
    \let\mint@limits=#1\relax
    \ifx\mint@limits\relax
      \sbox4{$#5\int_{\kern1sp}^{\kern1sp}\m@th$}%
      \ifdim\wd4>\wd2 %
        \let\mint@limits=\nolimits
      \else
        \let\mint@limits=\limits
      \fi
    \fi
    \ifx\mint@limits\displaylimits
      \ifx#5\displaystyle
        \let\mint@limits=\limits
      \fi
    \fi
    \ifx\mint@limits\limits
      \sbox0{$#7#3\m@th$}%
      \sbox2{$#7#4\m@th$}%
      \ifdim\wd0>\dimen2 %
        \dimen2=\wd0 %
      \fi
      \ifdim\wd2>\dimen2 %
        \dimen2=\wd2 %
      \fi
    \fi
    \rlap{%
      $#5%
        \vcenter{%
          \hbox to\dimen2{%
            \hss
            $#6{#2}\m@th$%
            \hss
          }%
        }%
      $%
    }%
  \endgroup
}
\numberwithin{equation}{section}
\theoremstyle{plain}
\newtheorem{thm}{Theorem}[section]
\newtheorem{cor}[thm]{Corollary}
\newtheorem{lem}[thm]{Lemma}
\theoremstyle{definition}
\newtheorem{defn}[thm]{Definition}
\newtheorem{rem}[thm]{Remark}
\newtheorem{cons}[thm]{Consequence}
\newcommand{\G}{\mathbb{G}}
\def\G{\mathbb{G}}
\def\H{\mathbb{H}^{n}}
\def\Lh{\mathcal{L}}
\def\Hh{\mathbb{H}^{1}}
\def\L{\mathcal{L}_{p}}
\def\V{V(\sqrt{t})}
\def\G{{\mathbb G}}
\def\L{\mathcal{L}_{\mathbb{G}}}
\def\LL{\mathcal{L}_{\mathbb{H}^1}}
\def\LLL{\mathcal{L}_{\mathbb{H}^{n}}}
\def\H{\mathbb{H}^{n}}
\def\x{{\xi}}
\def\t{{\tau}}
\title[On global solutions of time-dependent heat equations]{On  global solutions of heat equations with time-dependent nonlinearities on unimodular Lie groups}
\author[M. Chatzakou]{Marianna Chatzakou}
\address{
	Marianna Chatzakou:
	\endgraf
    Department of Mathematics: Analysis, Logic and Discrete Mathematics
    \endgraf
    Ghent University, Belgium
  	\endgraf
	{\it E-mail address} {\rm marianna.chatzakou@ugent.be}
		}
\author[A. Kassymov]{Aidyn Kassymov}
\address{
  Aidyn Kassymov:
  \endgraf
  Institute of Mathematics and Mathematical Modeling
  \endgraf
  28 Shevchenko str.
  \endgraf
  050010 Almaty
  \endgraf
  Kazakhstan
  \endgraf
	{\it E-mail address}  {\rm kassymov@math.kz}}
\author[M. Ruzhansky]{Michael Ruzhansky}
\address{
  Michael Ruzhansky:
  \endgraf
  Department of Mathematics: Analysis, Logic and Discrete Mathematics
  \endgraf
  Ghent University, Belgium
  \endgraf
 and
  \endgraf
  School of Mathematical Sciences
  \endgraf
  Queen Mary University of London
  \endgraf
  United Kingdom
  \endgraf
  {\it E-mail address} {\rm michael.ruzhansky@ugent.be}
  }
\begin{document}

\thanks{The authors are supported by the FWO Odysseus 1 grant G.0H94.18N: Analysis and Partial
Differential Equations and by the Methusalem programme of the Ghent University Special Re-
search Fund (BOF) (Grant number 01M01021). Marianna Chatzakou is a postdoctoral fellow of
the Research Foundation – Flanders (FWO) under the postdoctoral grant No 12B1223N. Michael
Ruzhansky is also supported by the EPSRC grant  EP/V005529/1 and FWO grant G022821N. Aidyn
Kassymov is also supported by the Science Committee of the Ministry of Science and Higher Education of the Republic of Kazakhstan (Grant No. AP23484106). \\
\indent
{\it Keywords:} Nonlinear heat equation; unimodular Lie group; global well-posedeness; sub-Laplacian.}

\begin{abstract} 
In this work, we study the global well-posedeness of the heat equation with variable time-dependent nonlinearity of the form $\varphi(t)f(u)$ on unimodular Lie groups when the differential operator arises as the sum of squares of H\"ormander vector fields. For general unimodular Lie groups, we derive the necessary conditions for the nonexistence of global positive solutions. This gives different conditions in the cases of compact, polynomial, and exponential volume growth groups. In the case of the Heisenberg groups $\H$, we also derive sufficient conditions, which coincide with the necessary ones in the case of $\Hh$ (and this is also true for $\mathbb{R}^{n}$).
 In particular, in the case of the Heisenberg group $\Hh$ we obtain the necessary and sufficient conditions under which the aforesaid initial value problem with variable nonlinearity has a global positive solution. 
\end{abstract}

\maketitle

\section{Introduction}
In a pioneering work \cite{F66}, Fujita  proved that the initial heat problem
\begin{equation}\label{Heat_Cauchy}
\begin{split}
& u_t=\Delta u+u^p,\,\, x\in \mathbb{R}^N,\, t>0, \\&
u(0,x)=u_0(x)\geq 0,\, x\in \mathbb{R}^N,
   \end{split}
\end{equation}
has no global in-time non-negative solution for
\begin{equation}\label{CrExFuj}
1<p<p_F=1+\frac{2}{N},
\end{equation}
with $u_0\not\equiv 0$. In the same direction the authors of \cite{H73} considered the problem \eqref{Heat_Cauchy} in Euclidean spaces with $N=1,2$ for $p=p_{F}$, while in \cite{KST, W81}, the latter case was studied for general $N$, where non-existence was also proved. In \cite{W81} the proof is rather simple, compared with the existing methods, and additionally, the author proved that under certain conditions the  solutions to \eqref{Heat_Cauchy} when $N(p-1) \leq 1$ blow up in the $L^p$-norm. In the literature, the critical exponent $p_{F}$ plays a significant role in investigating the global existence and nonexistence for the Cauchy problem \eqref{Heat_Cauchy}, and it is known as the Fujita critical exponent, named after Fujita.

Variations of the problem \eqref{Heat_Cauchy} were considered that include cases where $u^p$ is replaced by some function $f$ applied on the solution $u$, or even cases where a time-dependent non-linearity, say of the form $h(t)$, appears as an extra term in front of $f(u)$. Let us also mention  the case where the differential operator is replaced by the $p$-Laplacian, which has been studied e.g. in \cite{CH21}. In \cite{M90}, the author studied a variation of the initial bound problem \eqref{Heat_Cauchy} with the term $h(t)\cdot u^p$ on bounded and unbounded domains of $\mathbb{R}^N$ with different behaviour of $h$ in each case. In \cite{F68} Fujita also studied the solutions to the initial value problem  of the form
\begin{equation*}
    \begin{split}
        &\ u_t=\Delta u+f(u)\,, x \in \Omega \subset \mathbb{R}^N,\\&
   u|_{t=0}=u_{0}(x)\,, u|_{\partial \Omega}=0\,,     
    \end{split}
\end{equation*}
for different choices of $f$ (see also \cite{CH24}). More generally, on a sub-Riemannian manifold $M$,  the well-posedness of the Cauchy problem of the form 
\begin{equation}
\label{RY}
    \begin{split}
        &\ u_t=\mathcal{L}_{M} u+f(u)\,, x \in M\,, \\&
   u(0,x)=u_0(x)\,, x \in M\,,     
    \end{split}
\end{equation}
for $u_0 \geq 0$, was studied in \cite{RY22}, where $\mathcal{L}_{M}$ stands for the generalisation of the Laplace--Beltrami operator in the sub-Riemannian setting $M$. This included the determination of the Fujita exponent for general unimodular Lie groups. Also, in \cite{P98} the author investigated the Fujita exponent on nilpotent groups in the case $f(u)=u^{p}$. 

Here we generalise the result in \cite{RY22} for unimodular Lie groups by adding a time-dependent function $\varphi$ in front of $f(u)$. In particular, we study the global well-posedness of the nonlinear heat equation on a unimodular Lie group $\G$ of the form:
\begin{equation}\label{1}
    u_{t}(t,x)=\L u(t,x)+\varphi(t)f(u(t,x)),\,\,\,\,\,(t,x)\in \mathbb{R}_{+}\times \G,
\end{equation}
where $\varphi$ is a non-negative function, and $f$ is a continuous and non-negative function on $[0, \infty)$ with $f(0) = 0$ and $f(v) > 0$ for $v > 0$, with initial data 
\begin{equation}\label{2}
    u(0,x)=u_{0}(x),\,\,\,\,\,x\in \G.
\end{equation}
Specifically, our first result, Theorem \ref{thm1}, is general and provides a sufficient condition under which the initial value problem \eqref{1}-\eqref{2} has a global solution when $\G$ is unimodular Lie group. In Section \ref{SEC:4} we study the converse, deriving the necessary conditions for the existence of global solutions in the case of Heisenberg groups $\H$. In Corollary \ref{cor1} we see that the necessary and sufficient conditions coincide in case of the Heisenberg group $\Hh$ of topological dimension $3$.

Non-linear diffusion equations naturally appear in other sciences, and the existing literature on the topic is vast.  For instance, we refer to  \cite{AW78} where the authors studied the equation \eqref{Heat_Cauchy} in the context of population genetics where $u^p$ is replaced by $f(u)$ for some suitable for their purposes function $f$. For other works on the Fujita exponent where the non-mathematical motivation is discussed, we refer to \cite{BLMW02, A17, M13} to mention only very few.

\section{Preliminaries}
Let $(M, \mu)$ be a manifold with a measure $\mu$, and let $X_1,\ldots, X_k$ be vector fields on $M$ that are formally skew-adjoint on $L^2(M, \mu)$; i.e., we have 
\[
\int_{M} (X_i f)gd\mu=-\int_{M}f (X_ig)d\mu\,,\quad f,g \in C_{0}^{\infty}(M)\,.
\]
We will be mostly concerned with the case $M=\G$ being  a unimodular Lie group; in which case $d\mu$ is the bi-invariant Haar measure on $\G$. Then, the operator $e^{t \L}$ that arises from the Friedrichs extension of the sum of squares $\L=\sum\limits_{i=1}^{k}X_{i}^{2}$ of the vector fields $X_1,\ldots, X_k$ is a symmetric semigroup of contractions on $L^2(M, \mu)$. 

In this work, we consider a family $X_1,\ldots, X_k$ of $C^{\infty}$ vector fields on a manifold $M$ such that, at every point of $M$, the Lie algebra they generate is the whole tangent space of $M$; such vector fields are said to satisfy H\"ormander's condition. Roughly speaking, the diffusion process generated by the operator $\L=\sum\limits_{i=1}^{k} X_{i}^{2}$ ``follows'' the curves determined by the vector fields $X_1,\ldots, X_k$, and the path between any two points in $M$ stays tangent to the span of $X_1,\ldots, X_k$. This way one can define a distance, called the Carnot-Carath\'eodory distance, which is suitable for the study of the operator $\L$.

A special class of sub-Riemannian manifolds where the latter situation appears naturally is that of nilpotent Lie groups. Let us recall their definition: we say that the connected Lie groups $\G$ with Lie algebra $\mathfrak{g}$ is nilpotent (of rank $r$), if $\mathfrak{g}_{r+1}=\{0\}$, where $\mathfrak{g}_1=\mathfrak{g}$, and we have defined $\mathfrak{g}_i=[\mathfrak{g}, \mathfrak{g}_{i-1}]$.  Recall the $\mathfrak{g}$ can be identified with the left-invariant vector fields on $\G$.

In the sequel, we denote by $\G$ a unimodular Lie group equipped with the natural bi-invariant Haar measure, and by $\textbf{X}=\{X_1,\ldots, X_k \}$ a system of (left-invariant) H\"ormander vector fields on $\G$. If we denote by $\L=\sum\limits_{i=1}^{k}X_{i}^{2}$ the negative hypoelliptic operator on $\G$, with associated Markovian semigroup $e^{t\L}$, $t>0$, then the latter is given by the right convolution 
\begin{equation}\label{def.heat}
    e^{t\L}g(x):=\int_{\G}p_{t}(y^{-1}x)g(y)dy\,,
\end{equation}
where the so-called heat kernel $p_t$ is a positive solution of $\left(\frac{\partial}{\partial t}-\L \right)u=0$ with $\delta$  as the initial condition. 

The next theorem gives us upper and lower estimates for $p_t$ in the case where the latter arises by the semigroup $e^{t\L}$ as above. Its behaviour depends on how the ball with a radius of $t$ grows, specifically, whether it exhibits exponential or polynomial types.

As in \cite{VCSC92}, in the polynomial volume growth case, we say that the volume growth of $\G$ follows $V(r)\simeq r^{D}$ for $r\geq1$ and $V(r)\simeq r^{d}$ for $0\leq r\leq1$, where $D$ and $d$ are global and local dimensions, respectively, and $V(r)$ is the measure of the ball with radius $r$. 

We note that the global dimension $D$ depends only on the group $\G$ but not on the system of H\"ormander's vector fields, see e.g. \cite[Page 285]{CRTN01}.

We recall  the heat kernel estimate on unimodular Lie group in the following form:
\begin{thm}[Theorems VIII.2.9, VIII.4.3 \cite{VCSC92}]
    Let $\G$ be a unimodular Lie group and $\textbf{X}$ be a H\"ormander system of left-invariant vector fields on $\G$. If $\G$ has a polynomial volume growth, then  there exists $C,c>0$, such that 
    \begin{equation}
        \label{asymptpoly}
        c \V^{-1}e^{-\frac{C\rho^{2}(x)}{t}}\leq p_t(x) \leq C \V^{-1}e^{-\frac{c\rho^2(x)}{t}}\,, 
    \end{equation}
    for all $x \in \G$, and $t>0$, where $\rho$ is the Carnot-Carath\'eodory distance associated with $\textbf{X}$.

    If $\G$ has exponential growth and if the local dimension of $(\G,\textbf{X})$ is $d$, then for any $\nu\geq d$ and $\varepsilon>0$, there exists $C_{\nu,\varepsilon}>0$,  such that
    \begin{equation}
        \label{asymptexp}
        |p_t(x)| \leq C_{\nu,\epsilon} t^{-\frac{\nu}{2}}e^{-\frac{\rho^2(x)}{(4+\epsilon)t}}\,, 
    \end{equation}
    for all $x \in \G$ and $t>0$.
\end{thm}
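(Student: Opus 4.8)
The plan is to handle the two volume-growth regimes by entirely different machinery, since the abstract equivalence between Gaussian bounds and geometric functional inequalities that is available under polynomial growth collapses in the exponential case.

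\textbf{Polynomial case.} First I would isolate the two geometric ingredients carried by $(\G,\textbf{X})$. The hypothesis $V(r)\simeq r^{D}$ for $r\geq1$ and $V(r)\simeq r^{d}$ for $0\leq r\leq1$ forces the global volume doubling property $V(2r)\leq CV(r)$ for all $r>0$. Independently, the H\"ormander condition yields a scale-invariant $L^2$ Poincar\'e inequality on Carnot--Carath\'eodory balls: for every $B=B(x,r)$ one has $\int_B|f-f_B|^2\,d\mu\leq Cr^2\int_B|\nabla_{\textbf{X}}f|^2\,d\mu$, where $\nabla_{\textbf{X}}=(X_1,\dots,X_k)$ is the horizontal gradient. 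Granting these, I would invoke the Grigor'yan--Saloff-Coste characterisation: on a doubling space the conjunction of volume doubling and the scaled Poincar\'e inequality is equivalent to the scale-invariant parabolic Harnack inequality for $\partial_t-\L$, which is in turn equivalent to the two-sided estimate $c\,\V^{-1}e^{-C\rho^2(x)/t}\leq p_t(x)\leq C\,\V^{-1}e^{-c\rho^2(x)/t}$. Reading this back against the kernel $p_t$ of \eqref{def.heat} gives \eqref{asymptpoly}.

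\textbf{Exponential case.} Here doubling fails at large radii, so I would argue directly through Davies' perturbation method, which needs only an on-diagonal input. The local dimension $d$ gives the small-time bound $p_t(e)\leq Ct^{-d/2}$, while under exponential growth the on-diagonal kernel decays faster than any polynomial as $t\to\infty$; hence for every $\nu\geq d$ there is $C_\nu$ with $p_t(e)\leq C_\nu t^{-\nu/2}$ for all $t>0$. To produce the spatial Gaussian factor I would fix a Lipschitz weight $\psi$ with $|\nabla_{\textbf{X}}\psi|\leq1$ and analyse the conjugated semigroup $e^{\lambda\psi}e^{t\L}e^{-\lambda\psi}$; a standard differential-inequality (Davies--Gaffney) argument bounds its $L^1\to L^\infty$ norm by $C_\nu t^{-\nu/2}e^{c\lambda^2 t}$, and taking $\psi(\cdot)\approx\rho(x,\cdot)$ with the optimal choice $\lambda\approx\rho(x)/t$ yields the factor $e^{-\rho^2(x)/((4+\varepsilon)t)}$, the loss $\varepsilon$ absorbing the freedom in $c$. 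This is exactly \eqref{asymptexp}.

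\textbf{Main obstacle.} The genuinely hard input is the Poincar\'e inequality for H\"ormander systems, on which the entire polynomial-case argument rests; establishing it requires the sub-elliptic regularity and representation-formula analysis of Jerison. Equally delicate is the lower bound in \eqref{asymptpoly}: unlike the upper bound it cannot be reached by energy estimates alone and essentially forces one through the full parabolic Harnack inequality, via a chaining argument along a Carnot--Carath\'eodory geodesic. In the exponential regime the subtle point is instead verifying the all-time on-diagonal estimate for every $\nu\geq d$, which hinges on the super-polynomial large-time decay peculiar to exponential growth.
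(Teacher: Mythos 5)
This statement is not proved in the paper at all: it is imported verbatim from Varopoulos--Coulhon--Saloff-Coste (Theorems VIII.2.9 and VIII.4.3 of \cite{VCSC92}), so there is no internal argument to measure your proposal against. Judged on its own terms, your sketch is a sound roadmap and matches how these estimates are established in the literature, but it is a chain of citations to theorems each of which is itself a major result rather than a proof. In the polynomial case you route everything through the Grigor'yan--Saloff-Coste equivalence (doubling $+$ scaled Poincar\'e $\Leftrightarrow$ parabolic Harnack $\Leftrightarrow$ two-sided Gaussian bounds); this is the modern packaging, whereas the cited source argues more directly, obtaining the upper bound from the Nash-type inequality encoded by the volume growth together with a Davies--Gaffney integrated maximum principle, and the lower bound from a near-diagonal estimate (upper bound plus stochastic completeness plus doubling) propagated by chaining along a Carnot--Carath\'eodory geodesic --- you correctly flag the Jerison Poincar\'e inequality and the lower bound as the genuinely hard inputs. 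In the exponential case your outline is essentially the book's: the one step you understate is that the all-time on-diagonal bound $p_t(e)\leq C_\nu t^{-\nu/2}$ for every $\nu\geq d$ rests on Varopoulos's theorem that exponential volume growth forces super-polynomial (in fact $e^{-ct^{1/3}}$-type) large-time decay, which is itself deep; and to land on the constant $4+\epsilon$ in the Gaussian factor, rather than an unspecified $c$, one needs the refined version of Davies's perturbation argument in which the $L^1\to L^\infty$ bound of the twisted semigroup carries the factor $e^{(1+\epsilon')\lambda^2 t}$ before optimising $\lambda\approx\rho(x)/(2(1+\epsilon')t)$. With those provisos your plan would reproduce \eqref{asymptpoly} and \eqref{asymptexp}, but for the purposes of this paper the correct move is simply to cite \cite{VCSC92}, as the authors do.
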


\begin{rem}[Theorems IV.4.2, IV.4.3, IV.5.8 \cite{VCSC92} ]
    Let $\G$ be a nilpotent  Lie group, and let $\textbf{X}$ be a H\"ormander system of left-invariant vector fields on $\G$. Then, there exists $\epsilon>0$, and constants $C,C'>0$ and $C_\epsilon>0$, depending on $\epsilon$, such that 
    \begin{equation}
        \label{asympt}
        C \V^{-1}e^{-\frac{C'\rho^{2}(x)}{t}}\leq p_t(x) \leq C_\epsilon \V^{-1}e^{-\frac{\rho^2(x)}{4(1+\epsilon)t}}\,, 
    \end{equation}
    for all $x \in \G$, and $t>0$, where $\rho$ is the Carnot-Carath\'eodory distance associated with $\textbf{X}$, and $V(t)$ stands for the volume of the ball $B(t)$ with respect to $\rho$ of a fixed center and of radius $t$. In the case of a homogeneous Lie group $\G$, we have $D=d=Q$, the homogeneous dimension of the homogeneous Lie group $\G$. 
\end{rem}
It is easy to check that the solution to the Cauchy problem  \eqref{1}-\eqref{2} is given by
\begin{equation}\label{expr.u0}
\begin{split}
      u(t,x)&=e^{t\L}u_{0}(x)+\int_{0}^{t}e^{(t-\tau)\L}\varphi(\tau)f(u(\tau,x))d\tau.
\end{split}
\end{equation}

As usual, we say that the (positive) solution  to the heat equation \eqref{1} with the initial data \eqref{2} exists globally if  $\|u(t,\cdot)\|_{L^{\infty}(\G)}<\infty$ for all $t>0$. If for some $t^{*}>0$ we have $\|u(t,\cdot)\|_{L^{\infty}(\G)}\rightarrow\infty$ when $t\rightarrow t^{*}$, then we say that the solution blows up in finite time.

Let us give definition of  minorant and majorant functions:
\begin{defn}
    Let us set functions $f_{m}:[0,+\infty)\rightarrow [0,+\infty) $ and $f_{M}:[0,+\infty)\rightarrow [0,+\infty)$ as minorant and majorant functions defined by:
    \begin{equation}
        f_{m}(v):=\inf_{\alpha\in(0,1)}\frac{f(\alpha v)}{f(\alpha)},\,\,\,\,v\geq0,
    \end{equation}
    and 
     \begin{equation}
        f_{M}(v):=\sup_{\alpha\in(0,1)}\frac{f(\alpha v)}{f(\alpha)},\,\,\,\,v\geq0,
    \end{equation}
   respectively.
\end{defn}
From this definition, we have 
\begin{equation}\label{maj1}
f(\alpha)f_{m}(v)\leq f(\alpha v)\leq f(\alpha)f_{M}(v),\,\,\,\alpha\in (0,1),\,\,\,\,v\geq 0.    
\end{equation}
 For the function $f$ as in \eqref{1} we will sometimes assume the following conditions on the majorant and minorant functions associated to it:
\begin{equation}
    \label{maj}
    \lim_{v \rightarrow 0^{+}}\frac{f_M(v)}{v}=0\,,
\end{equation}
and 
\begin{equation}
    \label{min}
   \int_{1}^{+\infty}\frac{1}{f_{m}(v)}dv<+\infty\,.
\end{equation}
The reason for the above requirements will become apparent  in the proofs of Theorems \ref{thm1} and \ref{onhn}.

Also, by $C_{0}(\G)$ we denote the space of continuous functions vanishing at infinity.

\section{Main result}
Our first result applies to any unimodular Lie group $\G$ and provides the necessary conditions under which the initial value problem \eqref{1}-\eqref{2} has no global solutions or, alternatively, a sufficient condition for the existence of positive global solutions.
\begin{thm}\label{thm1}
     Let $\G$ be a unimodular Lie group and let $\textbf{X}$ be a H\"ormander system of left-invariant vector fields on $\G$. Assume that $f$ is a  continuous, non-negative function with $f(0)=0$ and $f(v)>0$ when $v>0$, satisfying $\eqref{maj}$, the mapping  $v\mapsto\frac{f(v)}{v}$ is non-decreasing, and $0\leq\varphi\in L^{1}_{\text{loc}}[0,+\infty)$. 
     \begin{itemize}
         \item[(i)] Let $\G$ have a polynomial growth. If \eqref{1}-\eqref{2} does not have a global solution for any $0<u_{0}\in C_{0}(\G)\cap L^{1}(\G)$, then for all $0<w\in C_{0} (\G)\cap L^{1}(\G)$ we have 
\begin{equation}\label{unimodcond1}
\int\limits_{0}^{\infty}\varphi(\tau)\frac{f(\|e^{\tau\L}w\|_{L^{\infty}(\G)})}{\|e^{\tau\L}w\|_{L^{\infty}(\G)}}d\tau=+\infty.    
\end{equation}

Furthermore, in this case, for every $\theta>0$, we also have 
\begin{equation}\label{cond2poly}
    \int_{1}^{\infty}\varphi(\tau)\tau^{\frac{D}{2}} f(\theta \tau^{-\frac{D}{2}})d\tau=+\infty,
\end{equation}
where $D$ is the global dimension of $\G$.

\item[(ii)] Let $\G$ have an exponential growth. If \eqref{1}-\eqref{2} does not have a global solution for any $0<u_{0}\in C_{0}(\G)\cap L^{1}(\G)$, then for all $0<w\in C_{0} (\G)\cap L^{1}(\G)$ we have \eqref{unimodcond1}.

Moreover, in this case, for every $\theta>0$, we also have 
\begin{equation}\label{cond2exp}
    \int_{1}^{\infty}\varphi(\tau)\tau^{\frac{\nu}{2}} f(\theta \tau^{-\frac{\nu}{2}})d\tau=+\infty,
\end{equation}
for any $\nu\geq d$.
 \end{itemize}

\end{thm}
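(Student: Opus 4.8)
The plan is to argue \eqref{unimodcond1} by contraposition and then to extract \eqref{cond2poly}--\eqref{cond2exp} from it by inserting the heat kernel bounds. For the first claim, suppose there is some $0<w\in C_{0}(\G)\cap L^{1}(\G)$ for which the integral in \eqref{unimodcond1} is finite; I will manufacture a positive global solution with initial datum $u_{0}=\epsilon w$ for suitably small $\epsilon>0$, contradicting the hypothesis that no global solution exists. Writing $M(\tau):=\|e^{\tau\L}w\|_{L^{\infty}(\G)}$ and $I:=\int_{0}^{\infty}\varphi(\tau)\frac{f(M(\tau))}{M(\tau)}\,d\tau<\infty$, the natural ansatz for a supersolution is $\bar u(t,x):=h(t)\,e^{t\L}w(x)$, with the scalar profile $h$ non-decreasing and $h(0)=\epsilon$. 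Here positivity of the heat kernel $p_{\tau}$ guarantees $e^{\tau\L}w>0$, so dividing by it is legitimate.

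The key reduction is that $\bar u$ is a supersolution of the Duhamel formulation \eqref{expr.u0} as soon as $h$ satisfies the scalar Volterra inequality $h(t)\ge\epsilon+\int_{0}^{t}\varphi(\tau)\frac{f(h(\tau)M(\tau))}{M(\tau)}\,d\tau$. This rests on two facts: since $v\mapsto f(v)/v$ is non-decreasing and $e^{\tau\L}w(x)\le M(\tau)$, one has the pointwise bound $f(h(\tau)e^{\tau\L}w(x))\le e^{\tau\L}w(x)\,\frac{f(h(\tau)M(\tau))}{M(\tau)}$; and, because the scalar factor $\varphi(\tau)\frac{f(h(\tau)M(\tau))}{M(\tau)}$ is independent of $x$, the semigroup law $e^{(t-\tau)\L}e^{\tau\L}=e^{t\L}$ collapses the Duhamel integral onto $e^{t\L}w$. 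Taking $h$ to solve the associated Volterra equation with equality then makes $\bar u$ a genuine supersolution, and the monotone Picard iteration $u_{n+1}=e^{t\L}u_{0}+\int_{0}^{t}e^{(t-\tau)\L}\varphi f(u_{n})\,d\tau$ stays below $\bar u$ (using $f\ge0$ non-decreasing and positivity-preservation of $e^{s\L}$) and converges to a solution bounded by $\bar u$.

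The delicate point, which I expect to be the main obstacle, is keeping $h$ bounded when $I$ is \emph{not} small; this is exactly where the smallness of $\epsilon$ enters. As long as $h\le1$ one has $hM\le M$, hence $\frac{f(hM)}{M}\le h\,\frac{f(M)}{M}$ by monotonicity of $f(v)/v$, so Gronwall's inequality gives $h(t)\le\epsilon\exp\!\big(\int_{0}^{t}\varphi\frac{f(M)}{M}\,d\tau\big)\le\epsilon e^{I}$. Choosing $\epsilon<e^{-I}$, a continuity (first-exit) argument shows $h$ can never reach $1$, so $h$ is globally bounded and $\bar u$ is a global supersolution; therefore $u_{0}=\epsilon w>0$ admits a global solution, the desired contradiction. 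The hypotheses enter only through the monotonicity of $f(v)/v$ (used twice), positivity of $p_{\tau}$, and $\varphi\in L^{1}_{\mathrm{loc}}$ together with $M$ being non-increasing, which makes the near-$0$ part of all the integrals harmless.

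Finally, once \eqref{unimodcond1} is known for \emph{every} admissible $w$, I fix one bump $0<w_{0}\in C_{0}(\G)\cap L^{1}(\G)$ and test with $w=\lambda w_{0}$, $\lambda>0$. In the polynomial case, \eqref{asymptpoly} with $V(\sqrt\tau)\simeq\tau^{D/2}$ for $\tau\ge1$ yields $c_{1}\tau^{-D/2}\le M_{0}(\tau)\le c_{2}\tau^{-D/2}$, the upper bound from $\|e^{\tau\L}w_{0}\|_{\infty}\le\|p_{\tau}\|_{\infty}\|w_{0}\|_{1}$ and the lower bound by evaluating at a fixed point where $\rho$ stays bounded on $\mathrm{supp}\,w_{0}$; in the exponential case \eqref{asymptexp} gives only the upper bound $M_{0}(\tau)\le C\tau^{-\nu/2}$, which is all that is needed. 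Since $M(\tau)=\lambda M_{0}(\tau)$, discarding the finite $[0,1]$-piece of \eqref{unimodcond1} and using $\frac{f(M(\tau))}{M(\tau)}\le\frac{f(\lambda c_{2}\tau^{-D/2})}{\lambda c_{2}\tau^{-D/2}}$ converts the divergence into $\int_{1}^{\infty}\varphi(\tau)\tau^{D/2}f(\lambda c_{2}\tau^{-D/2})\,d\tau=+\infty$. As $\lambda$ ranges over $(0,\infty)$, the constant $\theta=\lambda c_{2}$ exhausts $(0,\infty)$, giving \eqref{cond2poly} for every $\theta>0$; the exponential case \eqref{cond2exp} is identical with $D$ replaced by $\nu\ge d$ and $c_{2}$ by $C$.
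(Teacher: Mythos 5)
Your proposal is correct, and while the overall skeleton matches the paper's (argue by contradiction, build a global solution for a small multiple of $w$ by monotone Picard iteration dominated by a supersolution proportional to $e^{t\L}w$, then feed the heat-kernel upper bounds \eqref{asymptpoly}, \eqref{asymptexp} into \eqref{unimodcond1} and vary the mass of $u_0$ to exhaust all $\theta>0$), the mechanism you use to close the comparison is genuinely different. The paper bounds every iterate by the \emph{constant} multiple $(1+Z)e^{t\L}u_0$ and needs the majorant hypothesis \eqref{maj} to absorb the factor $f_M(\sqrt\lambda(1+Z))/\lambda$ in the induction; you instead use a time-dependent profile $h(t)$ and observe that monotonicity of $v\mapsto f(v)/v$ alone gives $f(hM)\le h\,f(M)$ for $h\le 1$, so that Gronwall yields $h\le\epsilon e^{I}<1$ once $\epsilon<e^{-I}$. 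This buys something real: your argument never invokes $f_M$ or condition \eqref{maj}, so it proves part of the theorem under strictly weaker hypotheses, and the quantitative dependence of the admissible datum on $I$ is transparent. The one step you should make explicit is the existence of the scalar Volterra solution $h$: since $f$ is only continuous, equality in the Volterra equation is not free, but you can bypass it entirely by running your Gronwall bound through the monotone scalar iterates $h_0=\epsilon$, $h_{n+1}(t)=\epsilon+\int_0^t\varphi(\tau)\tfrac{f(h_n(\tau)M(\tau))}{M(\tau)}d\tau$ and comparing with $E(t)=\epsilon\exp\bigl(\int_0^t\varphi\tfrac{f(M)}{M}d\tau\bigr)$, which satisfies $\epsilon+\int_0^t\varphi E\tfrac{f(M)}{M}d\tau=E(t)$ and hence dominates every iterate; the limit then is the desired supersolution (or one applies the same induction directly to the PDE-level iterates $u_n\le E(t)e^{t\L}w$). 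With that patch, and with the $[0,1]$ piece of the integral controlled exactly as in the paper via $M(\tau)\le\|u_0\|_{L^\infty(\G)}$ and $\varphi\in L^1[0,1]$, your derivation of \eqref{cond2poly} and \eqref{cond2exp} coincides with the paper's (the lower heat-kernel bound you mention is not needed).
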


\begin{rem}
    Theorem \ref{thm1} gives a criterion for the existence of global positive solutions for \eqref{1}-\eqref{2}. Namely, if $\G$ has polynomial growth, and $f$ and $\varphi$ are such that 
    $$\int_{1}^{\infty}\varphi(\tau)\tau^{\frac{D}{2}}f(\theta\tau^{-\frac{D}{2}})<\infty,\,\,\,\text{for\,\,\,some} \,\,\theta>0,$$
    then there exists $0<u_{0}\in C_{0}(\G)\cap L^{1}(\G)$ such that \eqref{1}-\eqref{2} has a positive global solution. Similarly, if $\G$ has exponential growth and there exists $\theta>0$ and $\nu\geq d$ such that the integral in \eqref{cond2exp} is finite, then there exists $0<u_{0}\in C_{0}(\G)\cap L^{1}(\G)$ such that \eqref{1}-\eqref{2} has a positive global solution.
\end{rem}
Before proceeding to proving Theorem \ref{thm1}, let us give several of its consequences in special cases of different types of groups and nonlinearities. Especially, in the special case $\varphi(t)\equiv 1$, we recover some results of \cite{RY22}.

We also note that in the commutative case $\G=\mathbb R^d$,  the result of Theorem \ref{thm1} with the condition \eqref{cond2poly} with $D=d$, implies (in the sense that our assumptions on $f$ are slightly less restrictive) the corresponding necessity result in \cite{CH22}.

\begin{cor}[Groups of polynomial growth]
    Let $\G$ have a polynomial volume growth, and let $\varphi(t)\equiv 1$ and $f(v)=v^{p}$ with $p>1$. If $\eqref{1}$-$\eqref{2}$ does not have global positive solutions, then we have  $1<p\leq p_{c}= 1+\frac{2}{D}$. Alternatively, if $p>p_c$ then $\eqref{1}$-$\eqref{2}$ has a positive global solution, which coincides with the Fujita critical exponent result in \cite[Remark 1.6]{RY22}.
\end{cor}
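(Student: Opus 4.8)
The plan is to derive the Corollary directly from Theorem \ref{thm1}(i) by a straightforward computation, since the Corollary is just the specialization of the general polynomial-growth criterion to the concrete choice $\varphi\equiv 1$ and $f(v)=v^p$. First I would verify that the hypotheses of Theorem \ref{thm1} are met for this data: the map $v\mapsto f(v)/v=v^{p-1}$ is non-decreasing because $p>1$; the function $f(v)=v^p$ is continuous, non-negative, vanishes at $0$, and is positive for $v>0$; the time weight $\varphi\equiv 1$ is clearly non-negative and in $L^1_{\mathrm{loc}}[0,+\infty)$. One must also check condition \eqref{maj}, namely $\lim_{v\to 0^+}f_M(v)/v=0$. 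For $f(v)=v^p$ the scaling $f(\alpha v)/f(\alpha)=v^p$ is independent of $\alpha$, so $f_M(v)=v^p$ and $f_M(v)/v=v^{p-1}\to 0$ as $v\to 0^+$ precisely because $p>1$. Thus all structural assumptions of Theorem \ref{thm1} hold.

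Next I would evaluate the nonexistence integral \eqref{cond2poly} for this concrete choice. Substituting $\varphi(\tau)=1$ and $f(\theta\tau^{-D/2})=\theta^p\tau^{-pD/2}$ gives
\begin{equation*}
\int_1^\infty \tau^{\frac{D}{2}}\,\theta^p\,\tau^{-\frac{pD}{2}}\,d\tau
=\theta^p\int_1^\infty \tau^{\frac{D}{2}(1-p)}\,d\tau.
\end{equation*}
This integral diverges if and only if the exponent $\frac{D}{2}(1-p)\geq -1$, i.e. $\frac{D}{2}(p-1)\leq 1$, which rearranges to $p\leq 1+\frac{2}{D}=p_c$. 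Combined with the standing hypothesis $p>1$, the divergence of \eqref{cond2poly} for every $\theta>0$ is therefore equivalent to $1<p\leq p_c$. By the contrapositive direction recorded in Theorem \ref{thm1}(i), if \eqref{1}-\eqref{2} has no global positive solution then \eqref{cond2poly} must hold for all $\theta>0$, and hence $1<p\leq p_c$; this is the first assertion of the Corollary.

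For the complementary statement, I would invoke the sufficiency criterion stated in the Remark following Theorem \ref{thm1}: if there exists some $\theta>0$ for which the integral in \eqref{cond2poly} is finite, then there is an initial datum $0<u_0\in C_0(\G)\cap L^1(\G)$ yielding a positive global solution. When $p>p_c$ the exponent $\frac{D}{2}(1-p)<-1$, so the integral converges for every $\theta>0$ (in particular for one), and the Remark delivers a global positive solution. Finally I would note that $p_c=1+\frac{2}{D}$ matches the Fujita critical exponent of \cite[Remark 1.6]{RY22} with the group dimension $D$ playing the role of the Euclidean dimension, recovering that result. The computation is wholly elementary, so the only point requiring genuine care is the verification of hypothesis \eqref{maj} — specifically confirming that the power nonlinearity satisfies the majorant condition, which is where the restriction $p>1$ is used — but this is immediate once one observes the scale-invariance $f_M(v)=v^p$.
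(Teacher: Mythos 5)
Your proposal is correct and follows the same route the paper intends: the corollary is obtained by specializing Theorem \ref{thm1}(i) (and the accompanying Remark for the converse direction) to $\varphi\equiv 1$, $f(v)=v^{p}$, observing that $f_M(v)=v^p$ so that \eqref{maj} holds for $p>1$, and noting that the integral in \eqref{cond2poly} reduces to $\theta^p\int_1^\infty\tau^{\frac{D}{2}(1-p)}d\tau$, which diverges precisely when $p\leq 1+\frac{2}{D}$. Your verification of the hypotheses and the divergence computation are both accurate, so nothing is missing.
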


In the cases of compact or exponential volume growth Lie groups (and power nonlinearities), the notion of Fujita exponent becomes irrelevant, the fact that was already observed in \cite{RY22}.

\begin{cor}[Groups of exponential growth]
    Let $\G$ have an exponential volume growth, and let  $\varphi(t)\equiv 1$ and $f(v)=v^{p}$ with $p>1$.
    If $\eqref{1}$-$\eqref{2}$ does not have global positive solutions, 
    condition \eqref{cond2exp} implies that we must have
    $\int_1^\infty \tau^{\frac{\nu}{2}(1-p)}d\tau=\infty$ for all $\nu\geq d$. Hence we must have $1<p\leq 1+\frac{2}{\nu}$ for all $\nu\geq d$, which is impossible. 
    
    Consequently, we can conclude that for any $p>1$, if 
     $\varphi(t)\equiv 1$ and $f(v)\leq K v^{p}$ for some $K>0$,
    $\eqref{1}$-$\eqref{2}$ has a global positive solution, which coincides with the conclusion in  \cite[Theorem 1.5]{RY22}.
\end{cor}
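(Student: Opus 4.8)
The plan is to insert the specific data $\varphi\equiv 1$, $f(v)=v^{p}$ into the necessary condition \eqref{cond2exp} and reduce it to an elementary power integral. Since $f(\theta\tau^{-\nu/2})=\theta^{p}\tau^{-\nu p/2}$, the integrand of \eqref{cond2exp} equals $\tau^{\nu/2}\theta^{p}\tau^{-\nu p/2}=\theta^{p}\tau^{\frac{\nu}{2}(1-p)}$, so \eqref{cond2exp} amounts to the assertion $\int_{1}^{\infty}\tau^{\frac{\nu}{2}(1-p)}\,d\tau=+\infty$. This standard integral diverges exactly when $\frac{\nu}{2}(p-1)\leq 1$, that is $p\leq 1+\frac{2}{\nu}$, and converges precisely when $\nu>\frac{2}{p-1}$.

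Next I would run the contrapositive of Theorem \ref{thm1}(ii). If \eqref{1}-\eqref{2} had no global positive solution, then Theorem \ref{thm1}(ii) would force \eqref{cond2exp}, hence the divergence above, to hold \emph{for every} $\nu\geq d$; by the convergence criterion this means $p\leq 1+\frac{2}{\nu}$ for all $\nu\geq d$, and letting $\nu\to\infty$ gives $p\leq 1$, contradicting $p>1$. Thus nonexistence cannot occur. Constructively this is simply the sufficiency criterion recorded in the remark after Theorem \ref{thm1}: fixing any $\nu\geq d$ with $\nu>\frac{2}{p-1}$ makes the integral in \eqref{cond2exp} finite, which produces some $0<u_{0}\in C_{0}(\G)\cap L^{1}(\G)$ for which \eqref{1}-\eqref{2} admits a global positive solution.

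To pass from $f(v)=v^{p}$ to the hypothesis $f(v)\leq Kv^{p}$, I would compare with the power majorant $\tilde f(v):=Kv^{p}$. This $\tilde f$ satisfies all the standing hypotheses of Theorem \ref{thm1}: it is continuous, non-negative, vanishes at $0$, the map $v\mapsto\tilde f(v)/v=Kv^{p-1}$ is non-decreasing for $p>1$, and $\tilde f_{M}(v)=v^{p}$ gives $\tilde f_{M}(v)/v\to0$ as $v\to0^{+}$, i.e. \eqref{maj}. By the previous paragraph the problem with nonlinearity $\tilde f$ has a global positive solution $\bar u$. Since $e^{t\L}$ is positivity-preserving and $f\leq\tilde f$, the Duhamel formula \eqref{expr.u0} is monotone in the nonlinearity, so the monotone-iteration scheme started from $e^{t\L}u_{0}$ produces a solution of \eqref{1}-\eqref{2} dominated by $\bar u$, hence bounded in $L^{\infty}(\G)$ for all $t>0$ and therefore global. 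Alternatively, when $f$ itself meets the hypotheses of Theorem \ref{thm1}, one can bypass comparison: the bound $f(\theta\tau^{-\nu/2})\leq K\theta^{p}\tau^{-\nu p/2}$ shows directly that the integral in \eqref{cond2exp} is finite for $\nu>\frac{2}{p-1}$, and the sufficiency criterion applies to $f$ verbatim.

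The computational heart of the statement, the power case, is routine; the only mildly non-routine point is the comparison step, where one must check that the domination $u\leq\bar u$ genuinely propagates through the fixed-point iteration solving \eqref{expr.u0}. This rests on the positivity of the heat kernel $p_{t}$ noted after \eqref{def.heat} together with the monotonicity of $f$, and is otherwise standard.
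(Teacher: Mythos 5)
Your proposal is correct and follows essentially the same route as the paper: substitute $\varphi\equiv 1$, $f(v)=v^{p}$ into \eqref{cond2exp}, observe that the resulting power integral $\int_1^\infty \tau^{\frac{\nu}{2}(1-p)}d\tau$ can only diverge for all $\nu\geq d$ if $p\leq 1+\frac{2}{\nu}$ for all such $\nu$ (impossible for $p>1$), and then invoke the sufficiency criterion from the remark after Theorem \ref{thm1} with any fixed $\nu>\frac{2}{p-1}$. Your additional comparison argument for the case $f(v)\leq Kv^{p}$ is a sound elaboration of a step the paper leaves implicit, and you correctly note both that the domination argument needs the monotonicity of $f$ (guaranteed by the standing hypothesis that $v\mapsto f(v)/v$ is non-decreasing) and that the direct bound $f(\theta\tau^{-\nu/2})\leq K\theta^{p}\tau^{-\nu p/2}$ makes the comparison unnecessary when $f$ itself satisfies the hypotheses of Theorem \ref{thm1}.
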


\begin{cor}[Compact Lie groups]
    Let $\G$ be a compact Lie group. In this case we have $D=0$, and condition \eqref{cond2exp} reduces to the property that $f(\theta)\int_1^\infty \varphi(t) d\tau=\infty.$ Since $f(\theta)>0$ for all $\theta>0$, we can conclude that if $\eqref{1}$-$\eqref{2}$ does not have global positive solutions, then we must have $\varphi\not \in L^1[1,\infty)$. In the case $\varphi(t)\equiv 1$ and $f(v)=v^{p}$, this recovers the conclusion in \cite[Remark 1.6]{RY22}.
    
    Alternatively, if $\varphi\in L^1[0,\infty)$, then $\eqref{1}$-$\eqref{2}$ has a global positive solution, for any nonlinearity $f(v)$, satisfying assumptions of Theorem \ref{thm1}.
\end{cor}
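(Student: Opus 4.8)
The plan is to obtain both halves of the statement directly from Theorem~\ref{thm1}(i) by specialising the global dimension to $D=0$; the only genuinely geometric input is the verification that a compact Lie group is a polynomial-growth group with $D=0$. First I would note that compactness makes the total Haar measure $|\G|$ finite, and that every Carnot--Carath\'eodory ball $B(r)$ with $r\geq 1$ contains $B(1)$ and is contained in $\G$. Hence $0<V(1)\leq V(r)\leq |\G|<\infty$ for all $r\geq 1$, so $V(r)\simeq 1=r^{0}$ and the global dimension is $D=0$. In particular $\G$ falls under case (i) of Theorem~\ref{thm1}, and \eqref{asymptpoly} applies with $V(\sqrt{t})\simeq 1$ for $t\geq 1$; the local dimension $d$, which governs only short-time/small-ball behaviour, plays no role in the large-time integral below.

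Next I would substitute $D=0$ into the necessary condition \eqref{cond2poly}. The factors $\tau^{D/2}$ and $\tau^{-D/2}$ both collapse to $1$, so for every $\theta>0$ the condition becomes
\[
\int_{1}^{\infty}\varphi(\tau)f(\theta)\,d\tau=f(\theta)\int_{1}^{\infty}\varphi(\tau)\,d\tau=+\infty .
\]
Since the standing hypotheses on $f$ give $f(\theta)>0$ for every $\theta>0$, the prefactor $f(\theta)$ cannot itself produce the divergence, and the requirement reduces exactly to $\int_{1}^{\infty}\varphi(\tau)\,d\tau=+\infty$, i.e. $\varphi\notin L^{1}[1,\infty)$. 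Thus if \eqref{1}--\eqref{2} has no global positive solution for any admissible $u_{0}$, then necessarily $\varphi\notin L^{1}[1,\infty)$. Taking $\varphi\equiv 1$ and $f(v)=v^{p}$ gives $\int_{1}^{\infty}d\tau=+\infty$, recovering the nonexistence conclusion of \cite[Remark 1.6]{RY22}.

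For the converse I would appeal to the existence criterion in the Remark following Theorem~\ref{thm1}. If $\varphi\in L^{1}[0,\infty)$, then fixing any $\theta>0$ and again using $D=0$,
\[
\int_{1}^{\infty}\varphi(\tau)\tau^{\frac{D}{2}}f\bigl(\theta\tau^{-\frac{D}{2}}\bigr)\,d\tau=f(\theta)\,\|\varphi\|_{L^{1}[1,\infty)}<\infty ,
\]
so the finiteness hypothesis of that Remark is satisfied for this $\theta$. It then supplies some $0<u_{0}\in C_{0}(\G)\cap L^{1}(\G)$ for which \eqref{1}--\eqref{2} has a positive global solution; no power structure on $f$ is needed, only the assumptions of Theorem~\ref{thm1}.

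I do not expect a substantive obstacle: all the analytic content is already carried by Theorem~\ref{thm1} and its Remark, and the corollary is a clean substitution once $D=0$ is established. The only steps demanding a line of care are the classification (confirming a compact group is a polynomial-growth group with $D=0$, rather than being grouped with the exponential-growth case (ii)) and the positivity observation $f(\theta)>0$, which is precisely what collapses the $f(\theta)$ prefactor and turns the necessary condition into the sharp statement $\varphi\notin L^{1}[1,\infty)$.
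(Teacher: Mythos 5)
Your proof is correct and follows essentially the same route as the paper: the corollary is a direct specialisation of Theorem \ref{thm1}(i) and the Remark after it with $D=0$, and your preliminary verification that a compact group has polynomial volume growth with $V(r)\simeq 1$, hence $D=0$, only makes explicit what the paper takes for granted. One small point in your favour: the paper's corollary cites \eqref{cond2exp}, which is evidently a slip for \eqref{cond2poly}, and your substitution into \eqref{cond2poly} is the intended reading.
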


Before proceeding with the proof of Theorem \ref{thm1}, let us first prove an estimate for the $L^{\infty}(\G)$-norm of $e^{t \L}u_0$, when $0< u_0 \in C_{0}(\G)$, showing that the norms in \eqref{unimodcond1} are well-defined. For such $u_0$, using \eqref{expr.u0}, we get
\begin{equation*}
    \begin{split}
    e^{t\L}u_0(x) &= \int_{\G}p_t(y^{-1}x)u_0(y)dy\\& 
    \leq \|u_{0}\|_{L^{\infty}(\G)}\int_{\G}p_{t}(y^{-1}x)dy\\&
    =\|u_{0}\|_{L^{\infty}(\G)}\,,
    \end{split}
\end{equation*}
and so we have proved that 
\begin{equation}
    \label{3.5}
    \|e^{t \L}u_0\|_{L^{\infty}(\G)}\leq \|u_{0}\|_{L^{\infty}(\G)}.
\end{equation}
\begin{proof}[Proof of Theorem \ref{thm1}]
 First we prove \eqref{unimodcond1} for both cases (i) and (ii).  We will proceed by contradiction. Assume that there exists $w>0$ such that
   \begin{equation}
       \label{Z}
       Z:=\int_{0}^{\infty}\varphi\left(\tau\right)\frac{f(\|e^{\tau\L}w\|_{L^{\infty}(\G)})}{\|e^{\tau\L}w\|_{L^{\infty}(\G)}}d\tau <\infty\,,
   \end{equation}
   where $w>0$ is a positive function such that $w\in C_{0}(\G)\cap L^{1}(\G)$. We will show that the global solution to \eqref{1}-\eqref{2} exists for some function $0<u_{0}\in C_{0}(\G)\cap L^{1}(\G)$, giving a contradiction. 
   We define $u_0:=\lambda w$ and we also choose $\lambda    \in (0,1)$ such that 
\begin{equation}\label{ocenz}
   \sqrt{\lambda} \Vert e^{t\L}w \Vert_{L^{\infty}(\G)}< 1\,.
\end{equation}
Note that condition \eqref{maj} implies that for each $\epsilon>0$, there exists $\delta>0$ such that if $0<\sqrt{\lambda}(1+Z)<\delta$, then  
$\frac{f_{M} (\sqrt{\lambda} (1+Z))}{\sqrt{\lambda} (1+Z)}<\epsilon,$ so that
\begin{equation}\label{epsilon}
0\leq \frac{f_{M} (\sqrt{\lambda} (1+Z))}{\lambda}<(1+Z)\frac{\epsilon}{\sqrt{\lambda}}\,.\end{equation}

   Let us now define the auxiliary sequence of positive functions $\{v_k\}_{k\geq0}$ on $\mathbb{R}_{+}\times \G$ by 
\begin{equation*}
  v_{k}(t,x):=
  \begin{cases}
  e^{t\L}u_{0}(x)\,, \quad \text{if}\quad k=0,\\
  e^{t\L}u_{0}(x)+\int_{0}^{t}\varphi\left(\tau\right)e^{(t-\tau)\L}f(v_{k-1}(\tau,x))d\tau \,,\quad \text{if}\quad k\geq 1\,.
  \end{cases}
\end{equation*}
Using mathematical induction we will show that for all $k\geq1$ and $(t,x)\in \mathbb{R}_{+}\times \G$, we have 
\begin{equation}
    \label{vk<vo}
    v_k(t,x) \leq (1+Z)e^{t\L}u_0(x)\,.
\end{equation}
Inequality \eqref{vk<vo} is immediate when $k=0$ since we have 
\[v_{0}(t,x)= e^{t\L}u_{0}(x)\leq (1+Z)e^{t\L}u_{0}(x)\]for all $(t,x)\in \mathbb{R}_{+}\times \G$.  Let $\zeta \in \mathbb{N}$, and assume that
\begin{equation}\label{vzeta1}
    v_{\zeta}(t,x) \leq (1+Z)e^{t\L}u_0(x).
\end{equation}
Using the fact that $\frac{f(v)}{v}$ is a non-decreasing function (and hence also $f(v)=\frac{f(v)}{v}v$ is non-decreasing), and noting that $e^{\tau\L}u_0(x)\not=0$ in view of $u_0>0$ and the positivity of the heat kernel,
we get 
\begin{equation*}
 \begin{split}
     v_{\zeta+1}(t,x)&=e^{t\L}u_0(x)+\int_{0}^{t}\varphi(\tau)e^{(t-\tau)\L}f(v_{\zeta}(\tau, x))d\tau\\&
     \stackrel{\eqref{vzeta1}}\leq  e^{t\L}u_0(x)+\int_{0}^{t}\varphi(\tau)e^{(t-\tau)\L}f((1+Z)e^{\tau\L}u_0(x))d\tau\\&
     =e^{t\L}u_0(x)+\int_{0}^{t}\varphi(\tau)e^{(t-\tau)\L}\left[e^{\tau\L}u_0(x)\frac{f((1+Z)e^{\tau\L}u_0(x))}{e^{\tau\L}u_0(x)}\right] d\tau\\&
     \leq e^{t\L}u_0(x)+e^{t\L}u_{0}(x)\int_{0}^{t}\varphi(\tau)\frac{f((1+Z)\|e^{\tau\L}u_0\|_{L^{\infty}(\G)})}{\|e^{\tau\L}u_0\|_{L^{\infty}(\G)}} d\tau.
 \end{split}   
\end{equation*}
Hence for  $u_0(x)=\lambda w(x)$, with $\lambda>0$ as above,  by the definition of the majorant function and the non-decreasing property of the functions $\frac{f(v)}{v}$ and $f(v)$ we get 
\begin{equation}
    \begin{split}
       &v_{\zeta+1}(t,x) \leq e^{t\L}u_0(x)+e^{t\L}u_{0}(x)\int_{0}^{t}\varphi(\tau)\frac{f((1+Z)\|e^{\tau\L}u_0\|_{L^{\infty}(\G)})}{\|e^{\tau\L}u_0\|_{L^{\infty}(\G)}} d\tau\\&
=e^{t\L}u_0(x)+e^{t\L}u_{0}(x)\int_{0}^{t}\varphi(\tau)\frac{f(\lambda(1+Z)\|e^{\tau\L}w\|_{L^{\infty}(\G)})}{\lambda\|e^{\tau\L}w\|_{L^{\infty}(\G)}} d\tau\\&
\stackrel{\eqref{maj1},\eqref{ocenz}}\leq e^{t\L}u_0(x)+\frac{f_{M}(\sqrt{\lambda}(1+Z))}{\sqrt{\lambda}}e^{t\L}u_{0}(x)\int_{0}^{t}\varphi(\tau)\frac{f(\sqrt{\lambda}\|e^{\tau\L}w\|_{L^{\infty}(\G)})}{\sqrt{\lambda}\|e^{\tau\L}w\|_{L^{\infty}(\G)}} d\tau\\&
\stackrel{\lambda <1}\leq  e^{t\L}u_0(x)+\frac{\sqrt{\lambda}f_{M}(\sqrt{\lambda}(1+Z))}{\lambda}e^{t\L}u_{0}(x)\int_{0}^{\infty}\varphi(\tau)\frac{f(\|e^{\tau\L}w\|_{L^{\infty}(\G)})}{\|e^{\tau\L}w\|_{L^{\infty}(\G)}} d\tau\\&
\stackrel{\eqref{epsilon}}\leq e^{t\L}u_0(x)+\epsilon(1+Z) e^{t\L}u_{0}(x)\int_{0}^{\infty}\varphi(\tau)\frac{f(\|e^{\tau\L}w\|_{L^{\infty}(\G)})}{\|e^{\tau\L}w\|_{L^{\infty}(\G)}} d\tau\\&
\stackrel{\eqref{Z}}\leq (1+ Z) e^{t\L}u_{0}(x)\,,
    \end{split}
\end{equation}
where for the last inequality we have chosen $\epsilon$ such that $\epsilon(1+Z)<1$, and we have proved our claim, i.e., that \eqref{vk<vo} holds true for any $k \geq 0$. 

Next, we will show, again by mathematical induction, that $v_{k}\leq v_{k+1}$ for all $k\geq 0$. To this end we note that the positivity $f$ and $\varphi$, give \[v_{0}(t,x)=e^{t\L}u_0(x)\leq e^{t\L}u_0(x)+\int_{0}^{t}\varphi\left(\tau\right)e^{(t-\tau)\L}f(v_{0}(\tau,x))d\tau=v_{1}(t,x)\,.\]   Assume now that for some fixed $\zeta\in\mathbb{N}$ the inequality $v_{\zeta}(t,x)\geq v_{\zeta-1}(t,x)$  holds true. Then, since $f$ is a non-decreasing function, we have 
\begin{equation}
    \begin{split}
     v_{\zeta+1}(t,x)&= e^{t\L}u_{0}(x)+\int_{0}^{t}\varphi\left(\tau\right)e^{(t-\tau)\L}f(v_{\zeta}(\tau,x))d\tau \\&
     \geq  e^{t\L}u_{0}(x)+\int_{0}^{t}\varphi\left(\tau\right)e^{(t-\tau)\L}f(v_{\zeta-1}(\tau,x))d\tau =v_{\zeta}(t,x),
    \end{split}
\end{equation}
and the latter proves our claim that the sequence $\{v_k\}$ is non-decreasing with respect to $k$. 
 
Summarising the above, the monotonicity of $\{v_k\}_{k\geq0}$, together with the upper bound for each $\{v_k\}_{k\geq0}$ given by \eqref{vk<vo}, imply, as an application of the monotone convergence theorem, that the limit $\lim\limits\limits_{k \rightarrow \infty}v_k(t,x)$ exists globally. Hence, by the definition of the sequence $\{v_k\}_{k\geq0}$ we get 
\[
\lim_{k \rightarrow \infty}v_k(t,x)=e^{t\L}u_0(x)+\int_{0}^{t}\varphi(\tau)e^{(t-\tau)\L}f(\lim_{k \rightarrow \infty}v_k(t,x))d\tau\,.
\]
On the other hand, by \eqref{expr.u0} we get that $\lim\limits_{k \rightarrow \infty}v_k(t,x)=u(t,x)$, so that by taking the limit as $k \rightarrow \infty$ of the inequality \eqref{vk<vo} we get 
\begin{equation}
    \label{upper.l.u}
    u(t,x)\leq (1+Z)e^{t\L}u_0(x)\,,
\end{equation}
and the latter implies that the solution $u(t,x)$ exists globally and we proved the first part.

Let us show \eqref{cond2poly} under conditions of Part (i). Take  $0<u_{0}\in C_{0} (\G)\cap L^{1}(\G)$, then by \eqref{unimodcond1} we have
    \begin{eqnarray}\label{a-b1}
        +\infty & = & \int_{0}^{\infty}\varphi(\tau)\frac{f(\|e^{\tau\L}u_{0}\|_{L^{\infty}(\G)})}{\|e^{\tau\L}u_{0}\|_{L^{\infty}(\G)}}d\tau \nonumber \\
        & = & \int_{0}^{1}\varphi(\tau)\frac{f(\|e^{\tau\L}u_{0}\|_{L^{\infty}(\G)})}{\|e^{\tau\L}u_{0}\|_{L^{\infty}(\G)}}d\tau+ \int_{1}^{\infty}\varphi(\tau)\frac{f(\|e^{\tau\L}u_{0}\|_{L^{\infty}(\G)})}{\|e^{\tau\L}u_{0}\|_{L^{\infty}(\G)}}d\tau\,.
    \end{eqnarray}
   Since $\int_{\G}p_{t}(x)dx=1$ with $0<u_{0}\in C_{0} (\G)\cap L^{1}(\G)$, by \eqref{3.5} we have
    \begin{equation*}
        \begin{split}
            \|e^{t\L}u_{0}\|_{L^{\infty}(\G)}\leq \|u_{0}\|_{L^{\infty}(\G)},\,\,\,\,\,\text{for\,\,\,all}\,\,t>0.
        \end{split}
    \end{equation*}  
Combining the last fact with the continuity and positivity of both $f$ and $\varphi$, and $\frac{f(v)}{v}$ being non-decreasing, we obtain that 
 $$\int_{0}^{1}\varphi(\tau)\frac{f(\|e^{\tau\L}u_{0}\|_{L^{\infty}(\G)})}{\|e^{\tau\L}u_{0}\|_{L^{\infty}(\G)}}d\tau\leq \frac{f(\|u_{0}\|_{L^{\infty}(\G)})}{\|u_{0}\|_{L^{\infty}(\G)}}\int_{0}^{1}\varphi(\tau)d\tau<+\infty.$$
  Hence, using \eqref{a-b1} we get that
    \begin{eqnarray}
        \label{a-b41}
        +\infty & = &  \int_{1}^{\infty}\varphi(\tau)\frac{f(\|e^{\tau\L}u_{0}\|_{L^{\infty}(\G)})}{\|e^{\tau\L}u_{0}\|_{L^{\infty}(\G)}}d\tau.
    \end{eqnarray}
    Now, to estimate the norm $ \|e^{t\L}u_{0}\|_{L^{\infty}(\G)}$, using \eqref{asymptpoly} and \eqref{def.heat}, for all $t>0$ we have 
    \begin{equation}\label{semigroup.est1}
    \begin{split}
          e^{t\L}u_{0}(x) &=\int_{\G}p_{t}(y^{-1}x)u_{0}(y)dy\\&
        \leq C \V^{-1}\int_{\G}e^{-\frac{c\rho^2(y^{-1}x)}{t}}u_{0}(y)dy\\&
        \leq   C \V^{-1}  \int_{\G}u_{0}(y)dy.
    \end{split}
    \end{equation}
   Inequality \eqref{semigroup.est1} implies that for all $t\geq 1$
    \begin{equation*}\label{h.last1}
    \begin{split}
         \|e^{t\L}u_{0}\|_{L^{\infty}(\G)}&\leq C \V^{-1} \int_{\G}u_{0}(y)dy\\&
         \leq C t^{-\frac{D}{2}}\int_{\G}u_{0}(y)dy\\&
         =\theta t^{-\frac{D}{2}},
    \end{split}
    \end{equation*}
    where $D$ is the global dimension and we have set $\theta=C \int_{\G}u_{0}(y)dy.$
 
   Hence, since $0 \leq \varphi$, using the monotonicity of the function $\frac{f(v)}{v}$ we get
    \begin{equation}       
         +\infty =\int_{1}^{\infty}\varphi(\tau)\frac{f(\|e^{\tau\L}u_{0}\|_{L^{\infty}(\G)})}{\|e^{\tau\L}u_{0}\|_{L^{\infty}(\G)}}d\tau \leq \int_{1}^{\infty}\varphi(\tau)\frac{f(\theta \t^{-\frac{D}{2}})}{\theta \t^{-\frac{D}{2}}}d\tau\,,
    \end{equation}
    and the latter implies that 
    \[
 \int_{1}^{\infty}\varphi(\tau)\frac{f(\theta \t^{-\frac{D}{2}})}{ \t^{-\frac{D}{2}}}d\tau=+\infty.
    \]
 Since this is true for any $0<u_{0}\in C_{0}(\G)\cap L^{1}(\G)$, and hence for any $\theta>0$, we complete the proof of \eqref{cond2poly}.

 Let us prove \eqref{cond2exp} for the case of exponential growth in Part (ii). Similarly
 to the previous case we have
   \begin{eqnarray*}
        +\infty & = & \int_{0}^{\infty}\varphi(\tau)\frac{f(\|e^{\tau\L}u_{0}\|_{L^{\infty}(\G)})}{\|e^{\tau\L}u_{0}\|_{L^{\infty}(\G)}}d\tau \nonumber \\
        & = & \int_{0}^{1}\varphi(\tau)\frac{f(\|e^{\tau\L}u_{0}\|_{L^{\infty}(\G)})}{\|e^{\tau\L}u_{0}\|_{L^{\infty}(\G)}}d\tau+ \int_{1}^{\infty}\varphi(\tau)\frac{f(\|e^{\tau\L}u_{0}\|_{L^{\infty}(\G)})}{\|e^{\tau\L}u_{0}\|_{L^{\infty}(\G)}}d\tau\,.
    \end{eqnarray*}
    By combining the last fact with $\|e^{\L}u_{0}\|_{L^{\infty}(\G)}\leq \|u_{0}\|_{L^{\infty}(\G)}$ and using \eqref{asymptexp}, we obtain
       \[
 \int_{1}^{\infty}\varphi(\tau)\frac{f(\theta \t^{-\frac{\nu}{2}})}{ \t^{-\frac{\nu}{2}}}d\tau=+\infty,
    \]
    completing the proof.
\end{proof}

\section{Particular case of $\H$}\label{SEC:4}

Our setting in this section is that of the Heisenberg group $\H$. After giving an elementary  description of the Heisenberg group $\H \cong \mathbb{R}^{2n+1}$ (of any topological dimension) we recall results on $\H$ that are necessary for the subsequent  analysis. 

The group law on $\H$ reads as follows

\begin{equation*}
\begin{split}
x \circ x'=(\xi+\xi',\zeta +\zeta'+2\text{Im}(\xi\cdot\xi')),\,\,x=(\xi,\zeta)\in\mathbb{C}^{n}\times\mathbb{R},
\end{split}
\end{equation*}
and the canonical left-invariant vector fields that span its Lie algebra can be computed as 
$$
X_{i}=\frac{\partial}{\partial g_{i}}+2v_{i}\frac{\partial}{\partial \zeta}, \,\,\, Y_{i}=\frac{\partial}{\partial v_{i}}-2g_{i}\frac{\partial}{\partial \zeta},\,\, T=\frac{\partial}{\partial \zeta},\,\,\,\quad j=1,\ldots,n\,,\,\,\, \xi=g+\text{i}v.
$$
The Heisenberg group $\H$ is a nilpotent Lie group of step 2 since we have
\[
[X_i,Y_i]=T\,,\quad \forall i=1,\cdots,n\,,
\]
and the the second-order differential operator 
$$\mathcal{L}_{\H}=\sum_{i=1}^{n}\left(X_{i}^{2}+Y_{i}^{2}\right)$$
is then the hypoelliptic analog of the Laplace-Beltrami operator in the Euclidean case, that is commonly called the sub-Laplacian on $\H$. Since the vector fields $X_i,Y_i$, $i=1,\cdots,n$, belong to the first stratum of the Lie algebra of $\H$, $\H$ is a stratified Lie group, and so also a homogeneous Lie group. 

Li in \cite{Li07} refined the estimates \eqref{asympt} on any nilpotent Lie group for the case of the Heisenberg group $\H$. In particular he obtained the optimal decay estimates of the heat kernel $p_t$ on $\H$ for the case where $t=1$.  The latter  reads as follows:
\begin{equation}\label{Lie.est1}
    A^{-1}P(n;\|\x\|,\rho(\x,\zeta))\leq p_{1}(\x,\zeta)\leq A P(n;\|\x\|,\rho(\x,\zeta))\,,\quad A>1\,,
\end{equation}
where $\rho$ stands, as in the general case, for the Carnot-Carath\'{e}odory distance from the origin, and $\|\cdot\|$ stands for the Euclidean distance (in this case applied to the vector lying in the first stratum of $\H$), and we have set $P(n;y,w)=e^{-\frac{w^{2}}{4}}\frac{(1+w^{2})^{n-1}}{(1+y w)^{n-\frac{1}{2}}}$ for all $y,w \geq 0$. Inequality \eqref{Lie.est1} can also be written in the following form 
\[
p_{1}(\x,\zeta) \asymp P(n;\|\x\|,\rho(\x,\zeta))\,.
\]
Note also that the fundamental solution to the heat equation on $\H$ has the following integral form
\begin{equation}\label{explicit}
    p_{t}(x)=p_{t}(\xi,\tau)=\frac{1}{2(4\pi t)^{n+1}}\int_{\mathbb{R}}e^{\frac{\lambda}{4t}\left(\tau-\|\xi\|^{2}\coth \lambda\right)}\left(\frac{\lambda}{\sinh \lambda}\right)^{n}d\lambda\,,
\end{equation}
see \cite{H76}, \cite{G77}, $p_t(x)dx$ is a probability measure on $\H$, see \cite{N96}, and  for all $t>0,$ $x \in \H$, we have $p_t(\x,\zeta)=t^{-n-1}p_1\left(\frac{\x}{\sqrt{t}}, \frac{\zeta}{t}\right)$. Hence, using \eqref{Lie.est1} and the homogeneity of $\rho$, $\|\cdot\|$ (with respect to the dilations), we get 
\begin{equation}\label{asymp2}
    p_{t}(\x,\zeta)\asymp t^{-n-1} P\left(n;\left\|\frac{\xi}{\sqrt{t}}\right\|,\frac{\rho(\x,\zeta)}{\sqrt{t}}\right)\,,
\end{equation}
or equivalently 
\begin{equation}
    \label{asympt23}
     p_{t}(\x,\zeta)\asymp t^{-n-1} e^{-\frac{\rho^2(\x,\zeta)}{4t}}\frac{\left(1+\frac{\rho^2(\x,\zeta)}{t} \right)^{n-1}}{\left(1+\frac{\|\x\|\rho(\x,\zeta)}{t} \right)^{n-\frac{1}{2}}}\,. 
\end{equation}

For completeness, we recall that  the initial value problem \eqref{1}-\eqref{2} in the case of the Heisenberg group $\H$ reads as below:
\begin{equation}\label{1h}
    u_{t}(t,x)-\Lh u(t,x)=\varphi(t)f(u(t,x)),\,\,\,\,\,(t,x)\in \mathbb{R}_{+}\times \H,
\end{equation}
with initial data 
\begin{equation}\label{2h}
    u(0,x)=u_{0}(x),\,\,\,\,\,x\in \H.
\end{equation}

From Theorem \ref{thm1}, we have the following consequence on $\H$:
\begin{cons}\label{cons1}
    Let $\H$ be the Heisenberg group of homogeneous dimension $Q=2n+2$.  Assume that  $f$ is a  continuous, non-negative function with $f(0)=0$ and $f(v)>0$ when $v>0$,  such that \eqref{maj} holds
   and the mapping  $v\mapsto\frac{f(v)}{v}$ is non-decreasing. Let also  $0\leq\varphi\in L^{1}_{\text{loc}}[0,+\infty)$. If, for all $0<u_{0}\in C_{0} (\H)\cap L^{1}(\H)$, \eqref{1h}-\eqref{2h} does not have a global solution, then  for all $0<w\in C_{0}(\H)\cap L^{1}(\H)$ we have
\begin{equation}\label{Hncondthm}
   \int_{0}^{\infty}\varphi(\tau)\frac{f(\|e^{\tau\LLL}w\|_{L^{\infty}(\H)})}{\|e^{\tau\LLL}w\|_{L^{\infty}(\H)}}d\tau=+\infty.
\end{equation}

Moreover, in this case, for any $\varepsilon>0$,  we  also have
  \begin{equation}\label{Hncond1}
      \int_{1}^{\infty}\varphi(\tau)\tau^{n+1} f(\varepsilon\tau^{-n-1})d\tau=+\infty.
  \end{equation}

\end{cons}
We will now give a partial converse to this statement, which gives the exact converse in the case of $n=1$.
\begin{thm}\label{onhn}
     Let $\H$ be the Heisenberg group of homogeneous dimension $Q=2n+2$.  Assume that  $f$ is a  continuous, convex, non-negative function with $f(0)=0$ and $f(v)>0$ when $v>0$,  such that \eqref{min} holds. Let also $0\leq\varphi\in C[0,+\infty)$. If the condition 
   \begin{equation}\label{Hncond}
       \int_{1}^{\infty}\varphi(\tau)\tau^{2n}f(\varepsilon \tau^{-2n})d\tau=+\infty
   \end{equation} 
   holds for every $\varepsilon >0$, then  there exists no global solution of \eqref{1h}-\eqref{2h}  for any data $0<u_{0}\in C_{0} (\H)\cap L^{1}(\H)$.

\end{thm}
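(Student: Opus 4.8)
The plan is to argue by contradiction. Suppose that for some admissible datum $0<u_0\in C_0(\H)\cap L^1(\H)$ the problem \eqref{1h}-\eqref{2h} admits a global solution, so that $\|u(t,\cdot)\|_{L^\infty(\H)}<\infty$ for every $t>0$. From the Duhamel representation \eqref{expr.u0}, positivity of the heat kernel $p_t$ and of $\varphi,f$ gives the elementary lower bound $u(t,x)\ge e^{t\L}u_0(x)>0$. Inserting this back into \eqref{expr.u0} and using that $f$ is non-decreasing (which follows from convexity together with $f(0)=0$ and $f\ge 0$) yields the first-iterate lower bound
\begin{equation*}
u(t,x)\ge\int_0^t\varphi(\tau)\,e^{(t-\tau)\L}\big[f(e^{\tau\L}u_0)\big](x)\,d\tau ,
\end{equation*}
which is the object I would analyse.

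Next I would extract a scalar lower bound from this, and here two features of the Heisenberg setting enter. First, convexity of $f$ together with the fact that $p_s(y^{-1}x)\,dy$ is a probability measure (recall $\int_\H p_s=1$) lets Jensen's inequality pass $f$ through the semigroup. A convenient device for defeating prefactors is the moving-weight functional $G(t):=\int_\H u(t,x)\,p_{S-t}(x)\,dx$ on $[0,S)$: because $\L$ is self-adjoint the two $\L$-terms cancel when one differentiates, leaving $G'(t)=\varphi(t)\int_\H f(u)\,p_{S-t}\,dx$, and Jensen then gives the clean inequality $G'(t)\ge\varphi(t)\,f(G(t))$. Second, and crucially, the refined kernel estimate \eqref{asympt23} furnishes a quantitative lower bound for $e^{\tau\L}u_0$: for $\tau\ge 1$ one restricts to a suitable set around the origin on which \eqref{asympt23} forces $e^{\tau\L}u_0(y)\ge\varepsilon\,\tau^{-2n}$ for some $\varepsilon>0$ depending on $u_0$. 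Feeding this into the integrand and combining it with the minorant inequality \eqref{maj1} (to replace $f$ by a multiple of $f_m$) is what is meant to produce the weight $\tau^{2n}$ and the argument $\varepsilon\tau^{-2n}$ of \eqref{Hncond}; the aim is an inequality in which the accumulated forcing on $G$ is bounded below by $\int_1^t\varphi(\tau)\,\tau^{2n}f(\varepsilon\tau^{-2n})\,d\tau$.

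Finally I would invoke the growth hypothesis \eqref{min}: since $\int_1^\infty dv/f_m(v)<\infty$, the scalar inequality for $G$ cannot persist once the accumulated forcing exceeds $\int_{G(0)}^\infty dv/f_m(v)$, so $G$ becomes infinite at some finite $t^\ast$. By \eqref{Hncond} the forcing integral diverges, hence $G$, and with it $\|u(t,\cdot)\|_{L^\infty(\H)}$, blows up at a finite time, contradicting global existence; as $u_0$ was arbitrary in the admissible class, this proves nonexistence. The main obstacle is the middle step: converting the anisotropic, two-parameter bound \eqref{asympt23} into a clean scalar lower bound carrying exactly the exponent $2n$, while simultaneously controlling the decaying factor $(t-\tau)^{-(n+1)}$ that a naive evaluation of $e^{(t-\tau)\L}[\cdots]$ at the origin produces (this is precisely what forces the moving-weight functional, or an iteration of the Duhamel inequality). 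It is also the reason the exponent obtained is $2n=Q-2$ rather than the sharp $n+1=Q/2$, the two coinciding only when $n=1$, which is why the theorem is an exact converse solely in that case.
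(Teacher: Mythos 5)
Your overall architecture (contradiction, Duhamel, Jensen through the heat semigroup, minorant function plus \eqref{min} to force finite-time blow-up of a scalar quantity) matches the paper's, but the step you yourself flag as ``the main obstacle'' is precisely the step you have not supplied, and neither of the devices you propose in its place actually produces the forcing term $\varphi(\tau)\tau^{2n}f(\varepsilon\tau^{-2n})$ that the hypothesis \eqref{Hncond} is about. In the paper the exponent $2n$ does not come from a lower bound $e^{\tau\L}u_0\geq\varepsilon\tau^{-2n}$ (the kernel at a fixed point decays like $\tau^{-n-1}$, not $\tau^{-2n}$; the paper deliberately \emph{weakens} its $\gamma_1$ bound to $t^{-2n}$). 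It comes from a separate time-comparison lemma for the heat kernel (Lemma \ref{Lem1}): for $t_1\geq t_2>0$ one has $p_{t_1}(x)\geq A^{-2}(t_2/t_1)^{2n}p_{t_2}(x)$ uniformly in $x$, the loss of $2n$ (rather than $n+1$) being caused by the anisotropic factor $(1+\|\xi\|\rho/t)^{n-1/2}$ in \eqref{asympt23}. Applying this with $t_1=2t-\tau$, $t_2=\tau$ inside the Duhamel integral for $e^{t\L}u(t,\cdot)$ converts $p_{2t-\tau}$ into $(\tau/(2t-\tau))^{2n}p_{\tau}$; only then does Jensen give $\int_0^t\tau^{2n}\varphi(\tau)f(e^{\tau\L}u(\tau,x))\,d\tau$ up to the factor $(2t)^{-2n}$, and only then does the resulting integral inequality $\gamma_3'/f_m(\gamma_3/\varepsilon)\gtrsim t^{2n}\varphi(t)f(\varepsilon t^{-2n})$ (via \eqref{maj1}) match the hypothesis \eqref{Hncond}. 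Without this lemma or an equivalent, your outline does not close.

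The moving-weight functional $G(t)=\int_{\H}u(t,x)p_{S-t}(x)\,dx$ is a legitimate alternative (it is the classical Kaplan-type argument), but note that it leads to a structurally different criterion: you get $G'\geq\varphi(t)f(G)$ with $G(0)\gtrsim S^{-n-1}$, hence blow-up when $\int_0^S\varphi$ exceeds $\int_{cS^{-n-1}}^{\infty}dv/f(v)$ for some $S$. That is not the condition \eqref{Hncond}, and deducing the theorem as stated would require you to prove that \eqref{Hncond} implies this other criterion for general $f$ and $\varphi$, which you do not do. (For power nonlinearities the moving-weight criterion is in fact stronger, reaching $p\leq 1+\tfrac{1}{n+1}$ rather than $p\leq 1+\tfrac{1}{2n}$, which is consistent with the paper's own remark that its exponent $2n$ is not optimal; but that observation is not a proof of the stated theorem.) In short: the endgame is right, but the middle step needs Lemma \ref{Lem1} written out, and the alternative route you sketch proves a different statement unless you add the bridge from \eqref{Hncond} to its blow-up criterion.
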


Combining Theorem \ref{thm1} and Theorem \ref{onhn}, we thus obtain the necessary and sufficient conditions in the case of 
the Heisenberg group $\Hh$ of homogeneous dimension $Q=4$. 

\begin{cor}\label{cor1}
       Let $\Hh$ be the Heisenberg group of homogeneous dimension $Q=4$.  Assume that  $f$ is a  continuous, convex, non-negative function with $f(0)=0$ and $f(v)>0$ for $v>0$,  such that \eqref{min} and \eqref{maj} hold. Let also $0\leq\varphi\in C[0,+\infty)$. Then  the following statements are equivalent:
 \begin{itemize}
 \item[(a)] $\int_{0}^{\infty}\varphi(\tau)\frac{f(\|e^{\tau\LL}w\|_{L^{\infty}(\Hh)})}{\|e^{\tau\LL}w\|_{L^{\infty}(\Hh)}}=+\infty,$ for all $0<w\in C_{0}(\Hh)\cap L^{1}(\Hh)$;
      \item[(b)] $\int_{1}^{\infty}\varphi(\tau)\tau^{2}f(\varepsilon \tau^{-2})d\tau=+\infty$, for all $\varepsilon >0$;
       \item [(c)] For all $0<u_{0}\in C_{0} (\Hh)\cap L^{1}(\Hh)$, there exists no global solution of \eqref{1h}-\eqref{2h} with initial data $u_{0}$.
  \end{itemize}
\end{cor}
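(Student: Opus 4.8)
The plan is to argue by contradiction: assume that \eqref{Hncond} holds for every $\varepsilon>0$ and yet a global positive solution $u$ of \eqref{1h}-\eqref{2h} exists for some $0<u_0\in C_0(\H)\cap L^1(\H)$, i.e. $\|u(t,\cdot)\|_{L^\infty(\H)}<\infty$ for all $t>0$. From the mild formulation \eqref{expr.u0} and the positivity of the heat kernel, $u$ dominates each term of its Duhamel expansion; in particular $u(t,x)\ge e^{t\LLL}u_0(x)>0$. The structural hypotheses on $f$ enter as follows: convexity together with $f(0)=0$ makes $f$ nondecreasing and $v\mapsto f(v)/v$ nondecreasing, while \eqref{min} is the Osgood/Dini-type condition guaranteeing finite-time blow-up of the scalar ODE $y'=f_m(y)$. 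The fact that $p_t\,dx$ is a probability measure (recorded after \eqref{explicit}) lets me apply Jensen's inequality to commute $f$ past the semigroup: for $g\ge0$ and $r>0$,
$$e^{r\LLL}\big[f(g)\big](x)=\int_{\H}p_r(y^{-1}x)f(g(y))\,dy\ \ge\ f\Big(\int_{\H}p_r(y^{-1}x)g(y)\,dy\Big)=f\big(e^{r\LLL}g(x)\big).$$

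Next I would reduce the problem to a scalar inequality in $t$. Applying $e^{(T-t)\LLL}$ to \eqref{expr.u0}, using the semigroup law and then the Jensen bound above, the quantity $Y(t,x):=e^{(T-t)\LLL}u(t,\cdot)(x)$ satisfies, for fixed $T>0$,
$$Y(t,x)\ \ge\ e^{T\LLL}u_0(x)+\int_0^t\varphi(s)\,f\big(Y(s,x)\big)\,ds,$$
so that for each fixed $x$ the map $t\mapsto Y(t,x)$ is a supersolution of $y'=\varphi f(y)$. The crucial input is a lower bound for $e^{t\LLL}u_0$: inserting the sharp Heisenberg heat-kernel estimate \eqref{asympt23}, one obtains, on a region adapted to the anisotropy of $\H$, a bound of the form $e^{t\LLL}u_0(x)\gtrsim \varepsilon\,t^{-2n}$ for $t\ge1$, the power $2n$ being what the anisotropic kernel permits uniformly over the region that controls the feedback. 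Feeding this into the displayed supersolution inequality and using the minorant bound \eqref{maj1}, $f(\alpha v)\ge f(\alpha)f_m(v)$ for $\alpha\in(0,1)$, to isolate $f_m$, converts the spatial problem into a scalar differential inequality whose forcing is governed by $\varphi(\tau)\tau^{2n}f(\varepsilon\tau^{-2n})$.

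It then remains to run the blow-up dichotomy. Because of the minorant inequality together with \eqref{min}, the scalar supersolution blows up in finite time as soon as its accumulated forcing exhausts the finite budget $\int^{\infty}dv/f_m(v)$; condition \eqref{Hncond}, asserting $\int_1^\infty\varphi(\tau)\tau^{2n}f(\varepsilon\tau^{-2n})\,d\tau=+\infty$ for every $\varepsilon>0$, supplies exactly the divergent forcing needed to exceed that budget at some finite time $t^\ast$. Hence the scalar lower bound, and therefore $u$ at a fixed point, must become infinite at $t^\ast<\infty$, contradicting $\|u(t,\cdot)\|_{L^\infty(\H)}<\infty$ for all $t$. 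This contradiction proves that no global solution exists.

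I expect the main obstacle to be the step concerning the heat-kernel lower bound: extracting a clean, uniform lower estimate of the prescribed power from the anisotropic kernel \eqref{asympt23} (whose profile mixes $\|\xi\|$, $\rho$ and the center variable) and choosing the region over which $f(u(s,\cdot))$ is controlled so that its convolution with $p_{t-s}$ retains a definite mass. This is precisely where the argument loses the sharp exponent $n+1$ of Theorem \ref{thm1} and only yields $2n$; the two powers coincide iff $2n=n+1$, i.e. $n=1$, which is why the necessary and sufficient conditions match only for $\Hh$ and give Corollary \ref{cor1}. A secondary technical point is to make the passage from the spatial feedback to a genuinely finite-time (rather than merely infinite-time) blow-up rigorous, which is exactly the reason convexity of $f$ and the Dini condition \eqref{min} are imposed.
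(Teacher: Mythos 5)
Your overall strategy is the paper's: the corollary is obtained by closing the cycle $(c)\Rightarrow(a)\Rightarrow(b)\Rightarrow(c)$, where $(c)\Rightarrow(a)\Rightarrow(b)$ is Theorem \ref{thm1} (via Consequence \ref{cons1}, using the upper heat-kernel bound $\|e^{t\LL}u_0\|_{L^{\infty}}\lesssim t^{-Q/2}=t^{-2}$ and the monotone iteration that builds a global supersolution when the integral in (a) converges), and $(b)\Rightarrow(c)$ is Theorem \ref{onhn}, whose necessary condition $\int_1^\infty\varphi(\tau)\tau^{2n}f(\varepsilon\tau^{-2n})d\tau=+\infty$ coincides with the sufficient one precisely when $2n=n+1$, i.e.\ $n=1$. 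You correctly identify this matching of exponents as the reason the equivalence holds only on $\Hh$, and your blow-up mechanism (Jensen through the probability measure $p_t\,dx$, the minorant inequality \eqref{maj1}, and the Osgood condition \eqref{min}) is exactly the paper's.

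There is, however, a concrete mismatch in the one step you write out. From $Y(t,x):=e^{(T-t)\LLL}u(t,\cdot)(x)$ you correctly derive
\begin{equation*}
Y(t,x)\;\ge\; e^{T\LLL}u_0(x)+\int_0^t\varphi(s)\,f\bigl(Y(s,x)\bigr)\,ds\,,\qquad 0\le t\le T\,,
\end{equation*}
but this inequality has unweighted forcing $\varphi(s)$, and no amount of minorant manipulation turns it into a differential inequality ``whose forcing is governed by $\varphi(\tau)\tau^{2n}f(\varepsilon\tau^{-2n})$''. Followed literally, your reduction yields an Osgood-threshold criterion of the form ``$\int_{cT^{-(n+1)}}^{\infty}dv/f(v)\le\int_0^T\varphi$ for some $T$'', which is not condition (b) for general $f,\varphi$ (they happen to agree for pure powers, which may be why the discrepancy is easy to miss). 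The paper instead works with $e^{t\LLL}u(t,x)$ at the \emph{moving} time $t$ (total smoothing $2t$), and the weights $\tau^{2n}$ arise from Lemma \ref{Lem1}, the kernel-ratio bound $p_{2t-\tau}(x)/p_{\tau}(x)\ge A^{-2}\bigl(\tau/(2t-\tau)\bigr)^{2n}$ applied inside the Duhamel term; this is also where the non-sharp exponent $2n$ genuinely enters (the lower bound for $e^{2t\LLL}u_0$ itself gives the sharp $t^{-(n+1)}$ and is deliberately weakened to $t^{-2n}$ only to match the Duhamel term). So you should either replace your fixed-$T$ reduction by the paper's matched-time one together with Lemma \ref{Lem1}, or carry out the Osgood-threshold argument honestly and verify it implies (c) under hypothesis (b) — as written, the passage from your displayed inequality to the stated forcing is a gap.
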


\begin{rem}
 On the $n$-dimensional Euclidean space $\mathbb{R}^{n}$, we also have equivalence of (a), (b) and (c), see \cite{CH22}.    
\end{rem}
    
\begin{rem}
   In the particular case when $n=1,$ $\varphi=1$ and $f(v)=v^p$ in (b), if $1<p\leq p_{c}=\frac{3}{2}$, the solution \eqref{1h}-\eqref{2h} blows-up. For the case $n=1$, our critical exponent $p_{c}=\frac{3}{2}$ coincides with the Fujita critical exponent in \cite{P98,RY22} ($p^{*}=1+\frac{2}{Q}\stackrel{n=1}=\frac{3}{2}$).

    In the general case when $n>1$, $1<p\leq p^{*}=1+\frac{1}{2n},$ from \eqref{Hncond} we  compute
\begin{equation*}
\varepsilon^{p}  \int_{1}^{\infty}\tau^{2n} \tau^{-2np}d\tau=+\infty.    
\end{equation*}
From \cite{RY22} and \cite{P98}, the critical exponent equals to $p_{c}=1+\frac{1}{n+1}$. Comparing this with our result, we have $p^{*}\leq p_{c}$, indicating that our result is not optimal. 

We {\bf conjecture} that on general unimodular Lie groups of polynomial growth, the optimal condition is given by \eqref{cond2poly}, which in the case of the Heisenberg group $\mathbb H^n$ reduces to
\begin{equation}
    \int_{1}^{\infty}\varphi(\tau)\tau^{n+1}f(\varepsilon \tau^{-n-1})d\tau=+\infty.
\end{equation}
\end{rem}
\begin{rem}
    Let $f(v)=v^{p}+v^{q}$  and $\varphi(t)=t^{r}+t^{s}$ where $p\geq q>1$, $t>0$,  and $r\geq s\geq -1$. Then by definitions of minorant and majorant functions, we have 
    \begin{equation*}
        f_{M}(v)=\max\left\{\frac{v^{p}+v^{q}}{2}, v^{q}\right\}\,\,\,\,\text{and}\,\,\,\,f_{m}(v)=\min\left\{\frac{v^{p}+v^{q}}{2}, v^{q}\right\}.
    \end{equation*}
    By using the last fact, we get
    \begin{equation*}
        \lim_{v\rightarrow 0^{+}}\frac{f_{M}(v)}{v}=0,\,\,\,\,\text{and}\,\,\,\,\int_{1}^{+\infty}\frac{dv}{f_{m}(v)}<+\infty.
    \end{equation*}
    Checking \eqref{Hncond1} with $p\leq 1+\frac{s+1}{2n}$ (that is, $s\geq 2n(p-1)-1$), we compute
    \begin{equation*}
    \begin{split}
         &\int_{1}^{+\infty}(\tau^{r}+\tau^{s})\tau^{2n}(\varepsilon^{p} \tau^{-2n p}+\varepsilon^{q} \tau^{-2n q})d\tau \\&
    =\int_{1}^{+\infty}\left[\varepsilon^{p}(\tau^{r+2n-2np}+\tau^{s+2n-2np})+\varepsilon^{q}(\tau^{r+2n-2nq}+\tau^{s+2n-2nq})\right]d\tau\stackrel{s\geq 2n(p-1)-1}=+\infty.
    \end{split}
    \end{equation*}
    Then for all $0<u_{0}\in C_{0} (\H)\cap L^{1}(\H)$, there exists no global solution of \eqref{1h}-\eqref{2h} with initial data $u_{0}$. In the particular case $n=1$, critical exponent equals to $p_{c}=\frac{s+1}{2}+1$. 
    
    On the $n$-dimensional Euclidean space $\mathbb{R}^{n}$, this case  was considered in \cite{LP14}. 
\end{rem}
\begin{rem}
Here, we note that from the convexity of function $f(v)$ with  $f(0)=0$ and $f(v)>0$ when $v>0$, we have that $v\mapsto \frac{f(v)}{v}$ is a non-decreasing function. From  the convexity of the function $f$, we have that $F(v,v')=\frac{f(v)-f(v')}{v-v'}$ is a non-decreasing function with respect to $v$ and for every $v'$. In addition,  if we assume that $f(0)=0$ and $f(v)>0$ when $v>0$, we have $F(v,0)=\frac{f(v)}{v}$, that is,  the $v\mapsto \frac{f(v)}{v}$ is a non-decreasing function. 
\end{rem}
 Firstly, here we present a preliminary lemma for the heat kernel on $\mathbb{H}^{n}$.
\begin{lem}\label{Lem1}
    Let $\H$ be the Heisenberg group with homogeneous dimension $Q=2n+2$ and let  $t_{1}\geq t_{2}>0$. Then, we have
    \begin{equation}\label{lemeq}
        \frac{p_{t_{1}}(x)}{p_{t_{2}}(x)}\geq A^{-2}\left(\frac{t_{2}}{t_{1}}\right)^{2n},\,\,\,\,\textit{for\,\, any}\,\,\,\,\, x\in \H,
    \end{equation}
    for some $A>1$ independent of $t_{1},t_{2}$ and $x$.
\end{lem}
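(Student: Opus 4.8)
The plan is to use the exact parabolic scaling of the Heisenberg heat kernel together with Li's two-sided estimate \eqref{Lie.est1} to reduce the claimed ratio bound to an elementary estimate on the model profile $P(n;\cdot,\cdot)$. Concretely, write $x=(\xi,\zeta)$, set $r:=\rho(\xi,\zeta)$ for the Carnot--Carath\'eodory distance to the origin and $y:=\|\xi\|$. The identity $p_t(\xi,\zeta)=t^{-n-1}p_1\bigl(\tfrac{\xi}{\sqrt t},\tfrac{\zeta}{t}\bigr)$, combined with the homogeneity of degree one of both $\rho$ and $\|\cdot\|$ under the dilations, gives $\rho\bigl(\tfrac{\xi}{\sqrt t},\tfrac{\zeta}{t}\bigr)=r/\sqrt t$ and $\|\tfrac{\xi}{\sqrt t}\|=y/\sqrt t$. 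Feeding these into \eqref{Lie.est1} yields, with one and the same constant $A>1$,
\[
A^{-1}t^{-n-1}P\Bigl(n;\tfrac{y}{\sqrt t},\tfrac{r}{\sqrt t}\Bigr)\le p_t(\xi,\zeta)\le A\,t^{-n-1}P\Bigl(n;\tfrac{y}{\sqrt t},\tfrac{r}{\sqrt t}\Bigr),\qquad t>0.
\]

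First I would take the lower bound at $t=t_1$ and the upper bound at $t=t_2$, so that
\[
\frac{p_{t_1}(x)}{p_{t_2}(x)}\ge A^{-2}\Bigl(\frac{t_2}{t_1}\Bigr)^{n+1}\frac{P\bigl(n;\tfrac{y}{\sqrt{t_1}},\tfrac{r}{\sqrt{t_1}}\bigr)}{P\bigl(n;\tfrac{y}{\sqrt{t_2}},\tfrac{r}{\sqrt{t_2}}\bigr)}.
\]
It then remains to bound the ratio of profiles from below by $(t_2/t_1)^{n-1}$, since this produces exactly the factor $A^{-2}(t_2/t_1)^{2n}$. Writing $w_i=r/\sqrt{t_i}$ and $y_i=y/\sqrt{t_i}$ and recalling $P(n;y,w)=e^{-w^2/4}(1+w^2)^{n-1}(1+yw)^{-(n-1/2)}$, the ratio splits into three factors. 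The hypothesis $t_1\ge t_2$ forces $w_1\le w_2$ and $y_1w_1=yr/t_1\le yr/t_2=y_2w_2$; hence the Gaussian factor $e^{(w_2^2-w_1^2)/4}\ge1$ and the factor $\bigl((1+y_2w_2)/(1+y_1w_1)\bigr)^{n-1/2}\ge1$ are both harmless for a lower bound.

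The only factor that is smaller than one is $\bigl((1+w_1^2)/(1+w_2^2)\bigr)^{n-1}$, and here the key observation is that
\[
\frac{1+w_1^2}{1+w_2^2}=\frac{t_2\,(t_1+r^2)}{t_1\,(t_2+r^2)}\ge\frac{t_2}{t_1},
\]
because $t_1+r^2\ge t_2+r^2$. Raising to the power $n-1\ge0$ gives $\bigl((1+w_1^2)/(1+w_2^2)\bigr)^{n-1}\ge(t_2/t_1)^{n-1}$, and multiplying the three factors yields $P\bigl(n;y_1,w_1\bigr)/P\bigl(n;y_2,w_2\bigr)\ge(t_2/t_1)^{n-1}$. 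Substituting this back produces the asserted inequality \eqref{lemeq}. The main (and essentially only) obstacle is to isolate which of the three profile factors can decrease as $t$ increases and to control that one factor; the estimate $\tfrac{1+w_1^2}{1+w_2^2}\ge t_2/t_1$ is the crux, while the monotonicity signs of the remaining factors must be verified but turn out to be favorable. I expect no difficulty for $n=1$, where the $(1+w^2)^{n-1}$ factor is simply $1$ and the bound reduces to the two scaling factors alone.
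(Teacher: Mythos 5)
Your proof is correct and follows essentially the same route as the paper: both arguments rest on the scaled Li estimate \eqref{asympt23}, the monotonicity in $t$ of the Gaussian factor and of the denominator $(1+\|\xi\|\rho/t)^{n-1/2}$, and the key inequality $1+\rho^2/t_1\geq (t_2/t_1)(1+\rho^2/t_2)$, which is exactly your bound $\tfrac{1+w_1^2}{1+w_2^2}\geq t_2/t_1$. The only difference is presentational (you factor the profile ratio explicitly, the paper inserts $t_2/t_1\leq 1$ inside the numerator and regroups), so nothing further is needed.
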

\begin{proof}[Proof of Lemma \ref{Lem1}]
 By using the optimal estimate of the heat kernel \eqref{asympt23}, we compute, for some $A>1$,
    \begin{equation*}
        \begin{split}
             p_{t_{1}}(x)&\geq A^{-1}t_{1}^{-n-1} e^{-\frac{\rho^2(x)}{4t_{1}}}\frac{\left(1+\frac{\rho^2(x)}{t_{1}} \right)^{n-1}}{\left(1+\frac{\|\x\|\rho(x)}{t_{1}} \right)^{n-\frac{1}{2}}}\\&
             \geq A^{-1}t_{1}^{-n-1} t_{2}^{n+1}t_{2}^{-n-1}e^{-\frac{\rho^2(x)}{4t_{2}}}\frac{\left(1+\frac{\rho^2(x)}{t_{1}} \right)^{n-1}}{\left(1+\frac{\|\x\|\rho(x)}{t_{2}} \right)^{n-\frac{1}{2}}}\\&
             \geq A^{-1}t_{1}^{-n-1} t_{2}^{n+1}t_{2}^{-n-1} e^{-\frac{\rho^2(x)}{4t_{2}}}\frac{\left(\frac{t_{2}}{t_{1}}+\frac{\rho^2(x)}{t_{1}} \right)^{n-1}}{\left(1+\frac{\|\x\|\rho(x)}{t_{2}} \right)^{n-\frac{1}{2}}}\\&
             =A^{-1}\frac{t^{n+1}_{2}}{t^{n+1}_{1}}\frac{t^{n-1}_{2}}{t^{n-1}_{1}}t_{2}^{-n-1}e^{-\frac{\rho^2(x)}{4t_{2}}}\frac{\left(1+\frac{\rho^2(x)}{t_{2}} \right)^{n-1}}{\left(1+\frac{\|\x\|\rho(x)}{t_{2}} \right)^{n-\frac{1}{2}}}\\&
             \geq A^{-2}\frac{t^{2n}_{2}}{t^{2n}_{1}}p_{t_{2}}(x),
        \end{split}
    \end{equation*}
    completing the proof.
\end{proof}

\begin{proof}[Proof of Theorem \ref{onhn}]

    Let us now  prove the non-existence result. 
    By the property of the semigroup $e^{t\LLL}$ we get
    \begin{equation}\label{b-c1'}
        \begin{split}
e^{t\LLL}&u(t,x)\stackrel{\eqref{expr.u0}}=e^{t\LLL}\left[e^{t\LLL}u_0(x)+\int_{0}^{t} e^{(t-\tau)\LLL}\varphi(\tau)f(u(\tau,x))\,d\tau\right]\\& 
=e^{2t\LLL}u_0(x)+\int_{0}^{t} e^{(2t-\tau)\LLL}\varphi(\tau)f(u(\tau,x))\,d\tau\\&
\stackrel{\eqref{def.heat}}=\int_{\H}p_{2t}(y^{-1}x)u_0(y)dy+\int_{0}^{t}\int_{\H}p_{2t-\tau}(y^{-1}x)\varphi(\tau)f(u(\tau,y))dyd\tau\,.
\end{split}
\end{equation}

Using \eqref{b-c1'} and the optimal decay estimate for the heat kernel \eqref{asympt23},  we compute
\begin{equation}\label{b-c2'}
    \begin{split}
    e^{t\LLL}u(t,x)& \geq \frac{(2t)^{-n-1}}{A} \int_{\H}\frac{e^{-\frac{\rho^{2}(y^{-1}x)}{8t}}\left(1+\frac{\rho^2(y^{-1}x)}{2t} \right)^{n-1}}{\left(1+\frac{\|(y^{-1}x)_1\|\rho(y^{-1}x)}{2t}\right)^{n-\frac{1}{2}}}u_{0}(y)dy\\&
    +\int_{0}^{t}\int_{\H}p_{2t-\tau}(y^{-1}x)\varphi(\tau)f(u(\tau,y))dyd\tau \\&
:=\gamma_{1}(t,x)+\gamma_{2}(t,x)\,.
        \end{split}
    \end{equation}
Assume that the positive solution $u$ exists globally. By fixing $x\in \H$, the set $B_x:=\{y \in \H\,:\rho(y^{-1}x)<1 \}$ is compact and $u_0>0$, hence there exists a constant $C_{x}$,   such that $u_0(y)>C_{x}>0$ for all $y \in B_x$. Hence, for $t \geq 1$ we have
    \begin{eqnarray}\label{b-c3'}
        \gamma_{1}(t,x) & \geq & \frac{C_{x}}{A(2t)^{n+1}} \int_{B_x} \frac{e^{-\frac{\rho^{2}(y^{-1}x)}{8t}}\left(1+\frac{\rho^2(y^{-1}x)}{2t} \right)^{n-1}}{\left(1+\frac{\|(y^{-1}x)_1\|\rho(y^{-1}x)}{2t}\right)^{n-\frac{1}{2}}}dy \nonumber \\
        & \stackrel{t\geq1} \geq & \frac{C_{x}}{A(B+1)^{n-\frac{1}{2}}(2t)^{n+1}} \int_{B_x}  e^{-\frac{\rho^{2}(y^{-1}x)}{8}}dy \\
        & = & \frac{C_{x}(2t)^{n-1}}{A(1+B)^{n-\frac{1}{2}}(2t)^{n+1}(2t)^{n-1}} \int_{B_x}  e^{-\frac{\rho^{2}(y^{-1}x)}{8}}dy\nonumber \\
        & \stackrel{t\geq1} \geq &\frac{2^{n-1}C_{x}}{A(1+B)^{n-\frac{1}{2}}(2t)^{2n}} \int_{B_x}  e^{-\frac{\rho^{2}(y^{-1}x)}{8}}dy \nonumber\,
    \end{eqnarray}
   for some $B>0$, since by the equivalence of the quasi-norms we have
    \[
    \left(1+\frac{\|(y^{-1}x)_1\|\rho(y^{-1}x)}{2}\right)^{n-\frac{1}{2}} \leq \left(1+\frac{2B\rho^{2}(y^{-1}x)}{2}\right)^{n-\frac{1}{2}} \leq (1+B)^{n-\frac{1}{2}}\,,\quad \forall y \in B_x\,.
    \]
   Therefore, from \eqref{b-c3'} we obtain 
    \begin{equation}
        \label{b-c4'}
        \gamma_{1}(t,x)   \geq   D^{x}_1 t^{-2n}\,,
    \end{equation}
   where  \[D^{x}_1:=\frac{C_{x}}{A(1+B)^{n-\frac{1}{2}}2^{n+1}}\int_{B_x}  e^{-\frac{\rho^{2}(y^{-1}x)}{8}}dy <\infty\,.\]
   
Let us now estimate the term $\gamma_{2}(t,x)$. Notice that, using Lemma \ref{Lem1} with $0\leq \tau\leq t$ (where $t_{2}=\tau$ and $t_{1}=2t-\tau$), we establish
\begin{equation*}
    \begin{split}
\gamma_{2}(t,x) &=\int_{0}^{t}\int_{\H}p_{2t-\tau}(y^{-1}x)\varphi(\tau)f(u(\tau,y))dyd\tau\\&
\stackrel{\eqref{lemeq}}\geq \frac{1}{A^{2}}\int_{0}^{t}\left(\frac{\tau}{2t-\tau}\right)^{2n}\int_{\H}p_{\tau}(y^{-1}x)\varphi(\tau)f(u(\tau,y))dyd\tau\\&
\geq \frac{1}{A^{2}(2t)^{2n}}\int_{0}^{t}\tau^{2n}\int_{\H}p_{\tau}(y^{-1}x)\varphi(\tau)f(u(\tau,y))dyd\tau,        
    \end{split}
\end{equation*}
and by using Jensen's inequality, we get
\begin{equation}\label{b-c5''}
    \begin{split}
\gamma_{2}(t,x) &
\geq \frac{1}{A^{2}(2t)^{2n}}\int_{0}^{t}\tau^{2n}\int_{\H}p_{\tau}(y^{-1}x)\varphi(\tau)f(u(\tau,y))dyd\tau\\&
\geq \frac{1}{A^{2}(2t)^{2n}}\int_{0}^{t}\tau^{2n}\varphi(\tau)f\left(\int_{\H}p_{\tau}(y^{-1}x)u(\tau,y)dy\right)d\tau\\&
=\frac{1}{A^{2}(2t)^{2n}}\int_{0}^{t}\tau^{2n}\varphi(\tau)f\left(e^{t\LLL}u\right)d\tau.       
    \end{split}
\end{equation}
Summarising, by \eqref{b-c5''} and \eqref{b-c4'} we get
\begin{equation}\label{why,blow'}
 e^{t\LLL}u(t,x)
 \geq 
 \gamma_{1}(t,x)+\gamma_{2}(t,x) \geq t^{-2n}\gamma_{3}(t,x)\,, 
\end{equation}
where 
\begin{equation}\label{b-c6'}
    \gamma_{3}(t,x):=D^{x}_1+\frac{1}{2^{2n}A^{2}}\int_{0}^{t}\tau^{2n}\varphi(\tau)f\left(e^{\tau\LLL}u(\tau,x)\right)d\tau.
\end{equation}
    Since $u(t,x)$ is a global solution, $\gamma_{3}(t,x)$ exists globally. Taking derivative with respect to $t$ we obtain

\begin{equation}\label{ocen123'}
    \begin{split}
        \frac{d\gamma_{3}(t,x)}{dt}& =  \frac{1}{2^{2n}A^{2}}t^{2n}\varphi(t)f\left(e^{t\LLL}u(t,x) \right)\\&
    \stackrel{\eqref{why,blow'}} \geq \frac{1}{2^{2n}A^{2}}t^{2n}\varphi(t)f\left[t^{-2n}\gamma_{3}(t,x)\right].
    \end{split}
\end{equation}
Let us now consider  $0<\varepsilon<1$ satisfying the inequality $\varepsilon< D^{x}_{1}$. By the definition of the minorant function $f_m$ and \eqref{maj1} we have
\begin{equation}\label{appl.min'}
    \begin{split}
    f\left[t^{-2n}\gamma_{3}(t,x)\right]&=\frac{f\left(\frac{\varepsilon}{t^{2n}}\frac{\gamma_{3}(t,x)}{\varepsilon}\right)}{f\left(\frac{\varepsilon}{t^{2n}}\right)}f\left(\frac{\varepsilon}{t^{2n}}\right)\\&
    \geq f_{m}\left(\frac{\gamma_{3}(t,x)}{\varepsilon}\right)f\left(\frac{\varepsilon}{t^{2n}}\right)\,,
    \end{split}
\end{equation}
for $0<\varepsilon<1$ and $t>1$. Also, we note that  $f_{m}(v)=0$ at the origin and $f(v)\geq 1$ for $v\geq1$. Hence, by $\varepsilon< D^{x}_{1}$ we compute
\begin{equation*}
    \begin{split}
    \frac{f\left(\frac{\varepsilon}{t^{2n}}\frac{\gamma_{3}(t,x)}{\varepsilon} \right)}{f\left(\frac{\varepsilon}{t^{2n}}\right)}\geq \frac{f\left(\frac{\varepsilon}{t^{2n}}\frac{D_{1}^{x}}{\varepsilon} \right)}{f\left(\frac{\varepsilon}{t^{2n}}\right)}\geq\frac{f\left(\frac{\varepsilon}{t^{2n}}\right)}{f\left(\frac{\varepsilon}{t^{2n}}\right)}=1,
    \end{split}
\end{equation*}
and by taking infimum with respect to $\frac{\varepsilon}{t^{2n}}$ with $0<\varepsilon <1$ and $t>1$, we have 
\begin{equation*}
    f_{m}\left(\frac{\gamma_{3}(t,x)}{\varepsilon}\right)\geq 1,\,\,\,\,\,t>1.
\end{equation*}
Now, a combination of \eqref{ocen123'} together with \eqref{appl.min'} and the last fact yields
\begin{equation}\label{tointeg'}
    \frac{\gamma'_{3}(t,x)}{f_{m}\left(\frac{\gamma_{3}(t,x)}{\varepsilon}\right)}\geq \frac{1}{2^{2n}A^{2}}t^{2n}\varphi(t)f\left(\varepsilon t^{-2n}\right),\,\,\,\,\,t>1.
\end{equation}

Now, integrating  inequality \eqref{tointeg'} over $(1,t)$, we get
\begin{equation}\label{ocen1234'}
\begin{split}
    G(\gamma_{3}(1,x))-G(\gamma_{3}(t,x))\geq \frac{1}{2^{2n}A^{2}\varepsilon} \int_{1}^{t}\tau^{2n}\varphi(\tau)f\left(\varepsilon \tau^{-2n}\right)d\tau,
\end{split}
\end{equation}
where $G(r):=\int_{\frac{r}{\varepsilon}}^{+\infty}f^{-1}_{m}(r_{1})dr_{1}$ and from \eqref{min} we have $G(r)<+\infty$ . It is easy to see that that the function $G$ is bijective, positive and decreasing, and we have $G(r)\rightarrow 0$ as $r\rightarrow +\infty.$ Note that by  \eqref{Hncond} we have
\[
\int_{1}^{\infty}\tau^{2n}\varphi(\tau)f\left(\varepsilon \tau^{-2n}\right)d\tau=+\infty\,,
\]
for all $\varepsilon>0$. Now, if we denote $h=h(t):=  \frac{1}{2^{2n}A^{2}\varepsilon}\int_{1}^{t}\tau^{2n}\varphi(\tau)f\left(\varepsilon \tau^{-2n}\right)d\tau$, $t>1$, then, using  inequality \eqref{ocen1234'}, we get
\[
+\infty=\lim_{t \rightarrow \infty}h(t)\geq  G(\gamma_{3}(1,x)) \geq h(t)\,.
\]
Clearly $h$ is a continuous function in $t \in (1,\infty)$, and the latter implies that there exists some $t^{*}$ satisfying $t\leq t^{*}\leq \infty$, such that 
\[
G(\gamma_{3}(1,x)) = h(t^{*})\,,
\]
or equivalently
$$G(\gamma_{3}(1,x))=\frac{1}{C\varepsilon} \int_{1}^{t^{*}}\tau^{2n}\varphi(\tau)f\left(\varepsilon \tau^{-2n}\right)d\tau\,.$$
Plugging the last expression into the inequality \eqref{ocen1234'} yields
\begin{equation}\label{b-c.fin'}
h(t^{*})-G(\gamma_{3}(t,x))\geq h(t)\,,\quad \forall t>1.
\end{equation}
Letting $t \rightarrow t^{*}$ in \eqref{b-c.fin'} we see that $-\lim\limits_{t \rightarrow t^{*}}G(\gamma_{3}(t,x))\geq 0$, which in turn implies that \begin{equation}\label{bclimit'}\lim_{t \rightarrow t^{*}}G(\gamma_{3}(t,x))= 0.\end{equation} Finally, the properties of $G$ and \eqref{bclimit'} imply that $\lim\limits_{t \rightarrow t^{*}}\gamma_{3}(t,x)= +\infty$, for some finite time $t^{*}$, and by \eqref{why,blow'} the solution cannot be global, and our claim has now been proved.
The proof of Theorem \ref{onhn} is complete.
\end{proof}


\begin{thebibliography}{BBGM12}
\bibitem[Al17]{A17}
M. Alfaro. Fujita blow up phenomena and hair trigger effect: The role of dispersal tails. \textit{Annales de l'I.H.P. Analyse non linéaire}, 34(5):1309-1327, 2017.

\bibitem[AW78]{AW78} D. G. Aronson
 and H. F. Weinberger. Multidimensional Nonlinear Diffusion Arising in Population Genetics. {\it Adv. Math.}, 30: 33-76, 1978.
 
 \bibitem[BLMW02]{BLMW02}
M. Birkner, J.A. López-Mimbela, and A. Wakolbinger. Blow-up of semilinear PDE's at the critical dimension. A probabilistic approach. \textit{Proc. Amer. Math. Soc.}, 130:2431–2442, 2002. 

\bibitem[CH21]{CH21}
S.Y. Chung and J. Hwang. A complete characterization of Fujita’s blow-up solutions for
discrete $p$-Laplacian parabolic equations under the mixed boundary conditions on networks. \textit{J. Math. Anal. Appl.}, 497:124859, 2021.

\bibitem[CH22a]{CH22}
S.Y. Chung and J. Hwang. A Necessary and Sufficient Conditions for the Global Existence of Solutions to Reaction-Diffusion Equations on $\mathbb{R}^{N}$. preprint, 2022.


\bibitem[CH22b]{CH24}
S.Y. Chung and J. Hwang. A necessary and sufficient condition for the existence of global solutions to discrete semilinear parabolic equations on networks. \textit{Chaos Solitons Fractals}, 158:112055,  2022.

\bibitem[CRTN01]{CRTN01}
T. Coulhon, E. Russ and V. Tardivel-Nachef. Sobolev algebras on Lie groups and Riemannian manifolds. \textit{Amer. J. Math.} 123:283–342, 2001.

\bibitem[Fu66]{F66} H. Fujita. On the blowing up of solutions of the Cauchy problem for $u_t=\Delta u+u^{1+\alpha}.$ {\it J. Fac. Sci. Univ. Tokyo Sect.}, 13:109-124, 1966.

\bibitem[Fu68]{F68}
H. Fujita. On some nonexistence and nonuniqueness theorems for nonlinear parabolic equations. \textit{Proc. Symp. Pure Math.}, 18(I):138-161, 1968.

\bibitem[Ga77]{G77}
B. Gaveau. Principe de moindre action, propagation de la chaleur et estimées sous-elliptiques sur certains groupes nilpotents, \textit{Acta Math.}, 139:95-153, 1977.


\bibitem[Ha73]{H73} K. Hayakawa. On nonexistence of global solutions of some semilinear parabolic differential equations. {\it Proc. Japan Acad.}, 49(7):503-505, 1973.

\bibitem[Hu76]{H76}
A. Hulanicki. The distribution of energy in the Brownian motion in the Gaussian field and analytic-hypoellipticity of certain subelliptic operators
on the Heisenberg group, \textit{Studia Math.}, 56:165–173, 1976.

\bibitem[KST77]{KST}
K. Kobayashi, T. Sino and H. Tanaka. On the growing up problem for semiliner heat equations. \textit{J. Math. Soc. Japan}, 29:407-424, 1977.

\bibitem[LP14]{LP14}
M. Loayza and C.S. da Paixão. Existence and non-existence of global solutions for a semilinear heat equation on a general domain. \textit{Electron. J. Differential Equations}, 168:(9), 2014.

\bibitem[Li07]{Li07}
    H.Q. Li.  Estimations asymptotiques du noyau de la chaleur sur les groupes de Heisenberg. \textit{Comptes Rendus Math.}, 344(8):497-502, 2007.
    
    \bibitem[Ma13]{M13}
    J.M. Mazón. Critical Fujita exponent for the diffusion equation in transparent media with source having power-like non-linearities. \textit{Adv. Differ. Equ.}, 18(3/4):243-264, 2013. 
    
    \bibitem[Me90]{M90}
    P. Meier. On the critical exponent for reaction-diffusion equations. \textit{Arch. Ration Mech. Anal.}, 109(1):63-71, 1990.

    \bibitem[Ne96]{N96}
    D. Neuenschwander. \textit{Probabilities on the Heisenberg Group. Limit Theorems and Brownian Motion}, Lecture Notes in Math., vol. 1630, Springer, Berlin, 1996, viii+139 pp.
    
    \bibitem[Pa98]{P98}
    A. Pascucci. Semilinear equations on nilpotent Lie groups: global existence and blow-up of solutions. \textit{Le Matematiche}, 53(2): 345--357, 1998.
    
     \bibitem[QS07]{QS1}
 P. Quittner and P. Souplet. {\it Superlinear Parabolic Problems: Blow-Up, Global Existence and Steady States}. Basel, Switzerland: Birkhäuser Verlag AG (2007).
 
    \bibitem[RY22]{RY22}
    M. Ruzhansky and N. Yessirkegenov. Existence and non-existence of global solutions for semilinear heat equations and inequalities on sub-Riemannian manifolds, and Fujita exponent on unimodular Lie groups. \textit{J. Differ. Equ.}, 308:455--473, 2022.

    \bibitem[VCSC92]{VCSC92}
    N. Th. Varopoulos, T. Coulhon, and L. Saloffe-Coste. \textit{Analysis and Geometry on Groups}, Cambridge Tracts in Mathematics, vol.100, Cambridge University Press, Cambridge, 1992.
    
   \bibitem[We81]{W81}
   F.B. Weissler. Existence and non-existence of global solutions for a semilinear heat equation. \textit{Isr. J. Math.}, 38:29--40, 1981.
\end{thebibliography}
\end{document}